\newcommand{\C}{\mathbb{C}}
\newcommand{\N}{\mathbb{N}}
\newcommand{\cO}{\mathcal{O}}
\newcommand{\fa}{\mathfrak{a}}
\DeclareSymbolFont{cyrletters}{OT2}{wncyr}{m}{n}
\DeclareMathSymbol{\sha}{\mathalpha}{cyrletters}{"58}
\newcommand{\eps}{\varepsilon}
\newtheorem{theorem}{Theorem}[section]
\newtheorem{lemma}[theorem]{Lemma}
\newtheorem{corollary}[theorem]{Corollary}
\newtheorem{figurecap}[theorem]{Figure}
\title{Non-Random Behaviour in Sums of Modular Symbols}
\author{Alex Cowan\\cowan@math.columbia.edu}
\date{}
\begin{document}
\maketitle
\begin{abstract}
\noindent We give explicit expressions for the Fourier coefficients of Eisenstein series twisted by Dirichlet characters and modular symbols on $\Gamma_0(N)$ in the case where $N$ is prime and equal to the conductor of the Dirichlet character. We obtain these expressions by computing the spectral decomposition of automorphic functions closely related to these Eisenstein series. As an application, we then evaluate certain sums of modular symbols in a way which parallels past work of Goldfeld, O'Sullivan, Petridis, and Risager. In one case we find less cancellation in this sum than would be predicted by the common phenomenon of ``square root cancellation'', while in another case we find more cancellation.
\end{abstract}
\section{Introduction}\label{introduction}
Let $N$ be a prime, and let $f(z)$ be a cusp form of weight $2$ and level $N$ with Fourier coefficients $a_n$. For $\gamma \in \Gamma_0(N)$, define the modular symbol
$$\langle\gamma,f\rangle := 2\pi i \int_{i\infty}^{\gamma i\infty}f(w)\,dw .$$
Modular symbols have been very useful tools historically and are of significant interest in their own right. Merel in \cite{merel96}, for instance, used modular symbols extensively in his proof that the number of torsion points on an elliptic curve over an arbitrary number field is bounded, and that the bound depends only on the degree of the number field. Another important use of modular symbols is in Cremona's algorithms for elliptic curves \cite{cre97}, which he used to generate parts of the incredibly useful LMFDB \cite{lmfdb}. These algorithms rely on the duality between cusp forms and modular symbols and the resulting action of the Hecke operators on modular symbols, and the information about elliptic curves he derives is obtained by examining the corresponding spaces of modular symbols.\\
\\
There has also been considerable interest in statistical questions regarding modular symbols because they are connected to central values of $L$-functions. Let $\chi$ be a Dirichlet character of conductor $m$, and define $L_f(s,\chi)$ for $\text{Re}(s) > 2$ via the series
$$L_f(s,\chi) := \sum_{n=1}^\infty\frac{\chi(n)a_n}{n^s}.$$
This function has an analytic continuation to all of $\C$. The ``central value'' $L_f(1,\chi)$ contains a large amount of arithmetic information about $f(z)$. The value of $L_f(1,\chi)$ can be given in terms of a finite sum of modular symbols when $\chi$ is primitive:
$$\tau(\bar\chi)L_f(1,\chi) = \sum_{a=1}^m\bar\chi(a)\frac{1}{2}\left(\left\langle\begin{pmatrix}a&*\\m&*\end{pmatrix},f\right\rangle \pm \left\langle\begin{pmatrix}-a&*\\m&*\end{pmatrix},f\right\rangle\right),$$
where $\pm$ is the sign of $\chi$ and $\tau(\bar\chi)$ is the Gauss sum \cite{petris18}. The matrices that appear in this formula are not necessarily in $\Gamma_0(N)$, but nevertheless they are defined via the same expression as before. Based on this formula, Mazur and Rubin \cite{mazrub16} recently made conjectures about the distribution of modular symbols symbols, partially in an attempt to draw a connection to conjectures about ranks of twisted elliptic curves by David, Fearnley, and Kisilevsky \cite{dfk04}. An average version of one of their conjectures was proven by Petridis and Risager in \cite{petris18}, and the full conjecture was proven by Diamantis, Hoffstein, Kiral, and Lee \cite{dhkl18}.\\
\\
Modular symbols are often studied for their own sake as well, especially from a statistical perspective. For $\gamma \in \Gamma_0(N)$, define $|\!|\gamma|\!|_z := |cz+d|^2$, where $c$ and $d$ are the lower-left and lower-right entries of $\gamma$ respectively. Goldfeld \cite{go99} conjectured that
$$\sum_{\gamma:|\!|\gamma|\!|_{i\!M} < X}\left\langle\gamma,f\right\rangle \sim \frac{3}{\pi}\prod_{p|N}\left(1 + \frac{1}{p}\right)^{-1}2\pi i\int_{iM}^{i\infty}f(w)\,dw\,\cdot X.$$
This was proved by Goldfeld and O'Sullivan in \cite{goosul03}.\\
\\
Petridis and Risager then show in \cite{petris04} that modular symbols are normally distributed when ordered by $|\!|\gamma|\!|_z$ for any fixed $z$ in the complex upper half-plane. To prove this result, Petridis and Risager study the properties of an Eisenstein series twisted by modular symbols, which was first defined by Goldfeld in \cite{go99}. This Eisenstein series is defined as
$$E^*(z,s,\chi) := \sum_{\Gamma_\infty\backslash\Gamma_0(N)}\chi(\gamma)\langle\gamma,f\rangle\text{Im}(\gamma z)^s,$$
where $\chi\!\left(\left(\begin{smallmatrix}a&b\\c&d\end{smallmatrix}\right)\right) := \chi(d)$.\\
\\
The Eisenstein series $E^*(z,s,\chi)$ is not automorphic, but does satisfy a certain cocycle relation and can be related to automorphic functions in a simple way. This has led to several papers dedicated entirely to the study of $E^*(z,s,\chi)$. Particularly, O'Sullivan in \cite{osul00} proves that this function has various nice properties such as an analytic continuation and a functional equation, and proves many things about the form of its Fourier expansion. Petridis in \cite{pet02} studies the poles and residues of this Eisenstein series.\\
\\
This paper is focused on the case where $\chi$ has conductor $N$, the level of $f(z)$. In the literature it is usually assumed that the conductor of $\chi$ is coprime to $N$. The techniques used in each case are quite different; surprisingly, the approach presented here fails completely if the conductor of $\chi$ is not exactly $N$. The objects used to obtain the results in this paper can still be defined when the conductor of $\chi$ is different from $N$ but it seems unlikely that they will have any reasonable properties.\\
\\
We prove the following theorem:
\begin{theorem}\label{geometricordering} Let $N$ be a prime, let $\chi$ be an even character of conductor $N$, and let $f$ be a cusp form of weight $2$ for $\Gamma_0(N)$. Let $z = x + iy$ be a complex number with positive imaginary part. If $\chi$ is complex, then
  \begin{align*}
  &\,\,\sum_{\substack{\gamma \in \Gamma_\infty\backslash\Gamma_0(N)\\|\!|\gamma|\!|_z \leq \text{\emph{Im}}(z)X}}\chi(\gamma)\langle\gamma,f\rangle\\
  &=\sum_{\rho: L(\rho,\bar\chi) = 0} \frac{(4\pi)^{1 - \frac{\rho}{2}}}{N}\frac{\Gamma(1 - \rho)}{\Gamma\!\left(1 - \frac{\rho}{2}\right)^2\Gamma\!\left(\frac{\rho}{2}\right)}\frac{L_f(0)L_f(1,\bar\chi)}{L(1 - \rho,\bar\chi)}\underset{\! s = \rho}{\text{\emph{Res }}}\frac{1}{L(s,\bar\chi)}\sum_{n\neq 0}e^{2\pi i n x}\sigma_{\rho}(n,\chi)|n|^{\frac{1-\rho}{2}}y^{\frac{1}{2}}K_{\frac{1}{2}+\rho}(2\pi|n|y) \cdot X^{1 - \frac{\rho}{2}}\\
  &- \sum_{\rho: L(\rho,\chi) = 0} \frac{(4\pi)^{1 - \frac{\rho}{2}}}{N}\frac{\Gamma(1 - \rho)}{\Gamma\!\left(1 - \frac{\rho}{2}\right)^2\Gamma\!\left(\frac{\rho}{2}\right)}\frac{L_f(0)L_f(1,\chi)}{L(1 - \rho,\chi)}\underset{\! s = \rho}{\text{\emph{Res }}}\frac{1}{L(s,\chi)}\sum_{n\neq 0}e^{2\pi i n x}\sigma_{\rho}(n,\bar\chi)|n|^{\frac{1-\rho}{2}}y^{\frac{1}{2}}K_{\frac{1}{2}+\rho}(2\pi|n|y) \cdot X^{1 - \frac{\rho}{2}}\\
    &+ \,\,\,\,\cO(X^{\frac{1}{2}}),
  \end{align*}
  while if $\chi$ is real, then
  $$\sum_{\substack{\gamma \in \Gamma_\infty\backslash\Gamma_0(N)\\|\!|\gamma|\!|_z \leq \text{\emph{Im}}(z)X}}\chi(\gamma)\langle\gamma,f\rangle = \cO(X^{\frac{1}{2}}).$$
\end{theorem}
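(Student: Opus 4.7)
The plan is to apply a Perron-type inverse Mellin transform, then shift the contour past the poles of the twisted Eisenstein series $E^*(z,s,\chi)$, whose meromorphic continuation is provided by the spectral/Fourier decomposition established earlier in the paper.

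The condition $|\!|\gamma|\!|_z \leq \text{Im}(z)X$ is equivalent to $X \cdot \text{Im}(\gamma z) \geq 1$, so the standard Mellin identity $\frac{1}{2\pi i}\int_{(\sigma)} u^s \, ds/s = \mathbf{1}_{u > 1}$, applied term-by-term with sum and integral exchanged, gives
$$\sum_{|\!|\gamma|\!|_z \leq \text{Im}(z)X}\chi(\gamma)\langle\gamma,f\rangle \;=\; \frac{1}{2\pi i}\int_{(\sigma)} E^*(z,s,\chi)\,\frac{X^s}{s}\,ds$$
for any $\sigma$ exceeding the abscissa of absolute convergence of $E^*(z,s,\chi)$. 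By the earlier Fourier expansion, the meromorphic continuation of $E^*(z,s,\chi)$ carries spectral denominators of the shape $L(2-2s,\chi)$ and $L(2-2s,\bar\chi)$, producing simple poles at $s = 1 - \rho/2$ for each nontrivial zero $\rho$ of $L(\cdot,\chi)$ or $L(\cdot,\bar\chi)$; the residue at such a pole is obtained from the chain rule $\text{Res}_{s = 1-\rho/2} L(2-2s,\cdot)^{-1} = -\tfrac{1}{2}\,\text{Res}_{u=\rho}L(u,\cdot)^{-1}$ and matches one term of the Bessel-$K$ series in the statement, with the factor $X^{1 - \rho/2}$ coming from $X^s$ evaluated at $s = 1 - \rho/2$.

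Moving the contour from $\text{Re}(s) = \sigma$ down to $\text{Re}(s) = 1/2$ collects precisely the two claimed residue sums. Trivial zeros $\rho \in \{0,-2,-4,\dots\}$ of $L(\cdot,\chi)$ or $L(\cdot,\bar\chi)$ would lie at $s \in \{1,2,3,\dots\}$, which are above the initial contour, but they contribute nothing because the gamma-factor ratio $\Gamma(1-\rho)/(\Gamma(1-\rho/2)^2\Gamma(\rho/2))$ accompanying each residue vanishes at exactly these $\rho$ (indicating that $E^*(z,s,\chi)$ is in fact regular there, the $L$-zero being cancelled by a gamma-zero in the spectral numerator). The pole of $X^s/s$ at $s = 0$ contributes only a bounded constant, absorbed into the error.

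The main obstacle will be bounding the shifted integral on $\text{Re}(s) = 1/2$ by $\cO(X^{1/2})$. This requires polynomial-in-$|\text{Im}(s)|$ control of $E^*(z,s,\chi)$ on the critical line, which via the spectral decomposition reduces to convexity bounds for $L(s,\chi)$ together with control of $1/L(s,\chi)$ near its zeros; the latter should yield to a zero-density estimate, or to making the shifted contour bypass each individual zero by a small detour. Finally, when $\chi$ is real we have $\chi = \bar\chi$, so the first and second residue sums become identical expressions with opposite overall signs, producing exact cancellation and leaving only the $\cO(X^{1/2})$ remainder.
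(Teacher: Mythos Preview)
Your proposal is correct and follows essentially the same route as the paper: express the sum as $\frac{1}{2\pi i}\int_{(\sigma)} E^*(z,s,\chi)\,X^s\,\frac{ds}{s}$ via Perron inversion, shift the contour left, and read off the residues at $s = 1-\rho/2$ from the Fourier expansion of $E^*(z,s,\chi)$ (Theorems~\ref{constterm} and~\ref{higherterms}), noting that the gamma factors kill the would-be contributions from trivial zeros and that the real-$\chi$ case collapses by $\chi=\bar\chi$. Your write-up is in fact more explicit than the paper's about the $\cO(X^{1/2})$ remainder and the real case; the only small slip is the remark that the points $s\in\{1,2,3,\dots\}$ lie ``above the initial contour'' (for $\sigma>2$ the points $s=1,2$ are actually crossed), but your parenthetical observation that $E^*$ is regular there is the correct reason they contribute nothing.
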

\noindent The error term in this theorem can be made completely explicit, and we omit it only for brevity. The secondary error term is of size $\cO(X^{\frac{1}{4}})$.
\begin{figure}[H]
\begin{minipage}[t]{0.6\linewidth}
\begin{adjustwidth}{0em}{1em}
This theorem is similar to the theorem of Goldfeld and O'Sullivan \cite{goosul03} mentioned above, but the introduction of the Dirichlet character $\chi$ makes the Eisenstein series $E^*(z,s,\chi)$ have more complicated poles, and this leads to a more complicated result. In particular, it is known that $\langle\gamma,f\rangle \ll |\!|\gamma|\!|_z^\eps$ \cite{petris04}, so it is reasonable to expect that the sum above will be of size $\cO(X^{\frac{1}{2}+\eps})$, based on the commonly observed phenomenon of ``square root cancellation''. However, theorem \ref{geometricordering} suggests that the sum is of size $\cO(X^{\frac{1}{2} + \frac{1}{2}\text{Re}(\rho)})$, where $\rho$ is the rightmost zero of $L(s,\chi)$. The Riemann hypothesis predicts that the rightmost zeros of $L(s,\chi)$ have real part $\frac{1}{2}$, and this would imply that the sum in theorem \ref{geometricordering} is $\cO(X^{\frac{3}{4}})$. It is conceivable that there is additional cancellation in this sum because of some ``coordination'' between the zeros of $L(s,\chi)$, which could cause this sum to be of size $\cO(X^{\frac{1}{2}+\eps})$, but numerical experimentation suggests that this additional cancellation does not occur, and that the sum is indeed of size roughly $X^{\frac{3}{4}}$. Figure \ref{E11afig} illustrates this.\\
\end{adjustwidth}
\end{minipage}
\begin{minipage}[t]{0.4\linewidth}
\centering
\includegraphics[
                 width=0.95\linewidth, height = 5.1cm, valign=t,right]{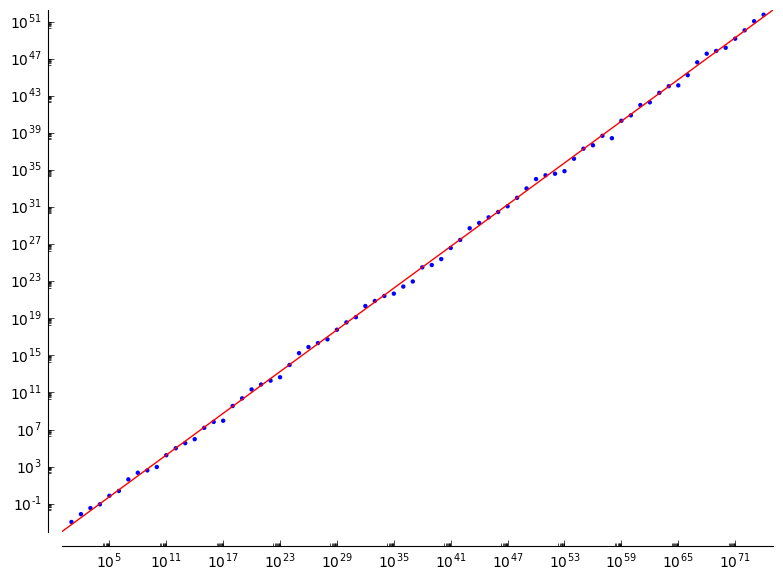}
\begin{adjustwidth}{1em}{0em}
  {\small \begin{figurecap}\label{E11afig} Blue: Absolute value of the main term of theorem \ref{geometricordering} for the cusp form attached to {E11a}, $\chi$ the Dirichlet character modulo $11$ with $\chi(2) = e^{\frac{2\pi i}{5}}$, and $z = i$.\\
      Red: $10^{-4}X^{\frac{3}{4}}$.\end{figurecap}}
\end{adjustwidth}
\end{minipage}%
\end{figure}
\noindent
To prove theorem \ref{geometricordering}, we first compute the Fourier coefficients of $E^*(z,s,\chi)$ in very explicit terms via Selberg spectral decomposition. This is done in section \ref{fourierexpansionssection}, and yields the following two theorems, which may also be of interest in their own right:
\begin{theorem}\label{constterm} Let $N$ be a prime, let $\chi$ be an even character of conductor $N$, and let $f$ be a cusp form of weight $2$ for $\Gamma_0(N)$. Then the constant term of the Fourier expansion for $E^*(z,s,\chi)$ is given by
\begin{align*}
  \int_0^1 E^*(x + iy,s,\chi)\, dx = \Bigg(2\tau(\bar\chi)L_f(1,\chi) \pi^{\frac{1}{2}}N^{-2s}&\frac{\Gamma\left(s-\frac{1}{2}\right)}{\Gamma(s)}\frac{L_f(2s)}{L(2s,\chi)}\\
  &\left.- 2\tau(\chi)L_f(1,\bar\chi) \pi^{\frac{1}{2}}N^{-2s}\frac{\Gamma\left(s-\frac{1}{2}\right)}{\Gamma(s)}\frac{L_f(2s)}{L(2s,\bar\chi)}\right)y^{1-s} .
\end{align*}
\end{theorem}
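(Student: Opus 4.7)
My plan is to compute the constant term by unfolding. Parametrize cosets in $\Gamma_\infty\backslash\Gamma_0(N)$ by pairs $(c,d)$ with $c > 0$, $N \mid c$, $\gcd(c,d)=1$. The key input is the vertical-integral representation $\langle\gamma_{c,d},f\rangle = 2\pi\int_0^\infty f(d^{-1}/c + iv)\,dv = \sum_{n\ge1}(a_n/n)e^{2\pi in d^{-1}/c}$, where $d^{-1}$ denotes an integer representative of the inverse of $d$ modulo $c$. The coefficient $\chi(d)\langle\gamma_{c,d},f\rangle$ is periodic in $d$ with period $c$, since $\chi$ has period $N$ dividing $c$. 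Substituting into $\int_0^1 E^*(x+iy,s,\chi)\,dx$, performing the change of variables $u = cx + d$, and combining the translates $d = d_0 + kc$ ($k\in\Z$) for each residue class $d_0$, the $d$-sum unfolds to a single Beta-function integral $\int_\R du/(u^2+c^2y^2)^s = \sqrt\pi\,\Gamma(s-\tfrac12)/\Gamma(s)\,(cy)^{1-2s}$, producing the $y^{1-s}$ factor and the gamma ratio.

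The remaining task is to evaluate
\[
\sum_{\substack{c>0 \\ N\mid c}} c^{-2s}\sum_{n \ge 1}\frac{a_n}{n}\sum_{\substack{d_0\bmod c\\\gcd(d_0,c)=1}}\chi(d_0)\,e^{2\pi in d_0^{-1}/c}.
\]
Writing $c = N^k c'$ with $\gcd(c',N)=1$ and applying the Chinese Remainder Theorem to the inner exponential sum: since $\chi$ has conductor exactly $N$, the component modulo $N^k$ collapses to a Gauss sum $\tau(\bar\chi)$ times $\chi(n)\bar\chi(c')$ (multiplied by an additional factor of $N^{k-1}$, and vanishing unless $N^{k-1}$ exactly divides $n$ when $k \ge 2$), while the component modulo $c'$ is the Ramanujan sum $c_{c'}(n)$. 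Summing over $c'$ via $\sum_{c'}\bar\chi(c')\,c_{c'}(n)(c')^{-2s} = L(2s,\bar\chi)^{-1}\sum_{d\mid n}\bar\chi(d)\,d^{1-2s}$, and over $k\ge1$ as a geometric series in $a_N N^{-2s}$ (using Hecke multiplicativity $a_{N^{k-1}n''} = a_N^{k-1}a_{n''}$ for $\gcd(n'',N)=1$), the problem reduces to evaluating the Dirichlet series $\sum_n\chi(n)a_n/n\cdot\sum_{d\mid n}\bar\chi(d)d^{1-2s}$ via its Euler product, which equals $L_f(2s)L_f(1,\chi)/L(2s,\chi)$ up to local factors at $N$ that cancel with the geometric-series denominator.

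The main difficulty is obtaining the symmetric form stated in the theorem, a difference of two $\chi\leftrightarrow\bar\chi$-related terms (needed in particular for the constant term to vanish when $\chi$ is real). I expect this to emerge from working instead with the $\bar\chi$-automorphic function
\[
G(z,s) := \sum_{\gamma}\chi(\gamma)\,F(\gamma z)\operatorname{Im}(\gamma z)^s = F(z)\,E^{(\chi)}(z,s) + E^*(z,s,\chi),
\]
where $F(z):=\sum_{n\ge1}(a_n/n)e^{2\pi inz}$ is the antiderivative of $2\pi if$ and $E^{(\chi)}$ is the ordinary $\chi$-twisted Eisenstein series. Selberg spectrally decomposing $G$ yields contributions from Eisenstein series at \emph{both} cusps of $\Gamma_0(N)$, and their constant terms at the cusp $\infty$---combined via the scattering matrix---produce the two symmetric pieces of the theorem. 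The factor of $2$ and the relative minus sign then reflect the relation $\tau(\chi)\tau(\bar\chi) = \chi(-1)N = N$ for even $\chi$ and the cocycle $\langle\gamma^{-1},f\rangle = -\langle\gamma,f\rangle$ of the modular symbol.
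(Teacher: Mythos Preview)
Your proposal ultimately lands on the paper's method: the function you call $G(z,s)$ is exactly the paper's $D(z,s,\chi)$ (your $F$ is the paper's $A$), and the paper proves the theorem precisely by spectrally decomposing $D$, computing $\langle D,E_{i\infty}\rangle$ and $\langle D,E_0\rangle$ by unfolding, and then reading off the constant term by contour-shifting in the continuous spectrum—one cusp contributing each of the two symmetric pieces, just as you anticipate.

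Your opening two paragraphs, however, sketch a genuinely different route: direct unfolding of $E^*$ itself into the double-coset Kloosterman sum $\phi^*_\chi(0,s)$ and evaluating the resulting exponential sums via CRT. This is in the spirit of Bruggeman--Diamantis (which the paper cites as an independent derivation of this theorem). What that approach buys is that it avoids the spectral machinery entirely; what the paper's approach buys is that the two-term antisymmetric form falls out naturally from the two cusps, whereas in the direct approach it is hidden. Indeed there is a loose end in your sketch: after your Ramanujan-sum step you carry a factor $L(2s,\bar\chi)^{-1}$, and then you assert the remaining Dirichlet series is $L_f(2s)L_f(1,\chi)/L(2s,\chi)$ ``up to local factors at $N$''. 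Put together this gives a single term with \emph{both} $L(2s,\chi)$ and $L(2s,\bar\chi)$ in the denominator, not the difference of two terms in the theorem; in particular your expression does not visibly vanish for real $\chi$, while the theorem's does. The reconciliation requires a nontrivial identity (essentially the functional equation for $L_f(s,\chi)$ together with careful bookkeeping of the $k\ge 2$ contributions), which you have not supplied. You were right to flag this as the main difficulty and to pivot to the spectral argument, but as written the direct computation is not yet a proof.
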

\begin{theorem}\label{higherterms} Let $N$ be a prime, let $\chi$ be an even character of conductor $N$, and let $f$ be a cusp form of weight $2$ for $\Gamma_0(N)$. Then, for $n\neq 0$, the $n^{\text{th}}$ term of the Fourier expansion for $E^*(z,s,\chi)$ is given by
\begin{align*}
  &\int_0^1E^*(x+iy,s,\chi)e^{-2\pi i n x}dx\\
  =&\sum_j \frac{(4\pi)^{1-s}}{2\langle M_j, M_j\rangle}\frac{\Gamma\left(s+i\lambda_j-\frac{1}{2}\right)\Gamma\left(s-i\lambda_j-\frac{1}{2}\right)}{\Gamma(s)}L\left(s+\frac{1}{2},f\times\overline{M_j}\right) \cdot c_{M_j}(n)y^{\frac{1}{2}}K_{i\lambda_j}(2\pi|n|y)\\
  -&\sum_{m = 1}^{\infty}\frac{a_m}{m}e^{-2\pi m y}\frac{4\pi^s\tau(\chi)}{N^{2s}\Gamma(s)}|n-m|^{\frac{1}{2} - s}\sigma_{2s - 1}(n-m,\bar\chi)y^{\frac{1}{2}}K_{s - \frac{1}{2}}(2\pi|n-m|y)\\
  +&\frac{2^{2s+1}\pi^{2-s}}{N\Gamma(s)}\sum_{k=0}^\infty\frac{\Gamma(2s+k-1)}{k!\Gamma(s+k)\Gamma(1-s-k)}\\
  &\quad\quad\quad\quad\quad\quad\quad\cdot L_f(k)\left(\frac{L_f(1-k,\chi)}{L(2s,\chi)L(2-2s-2k,\chi)}\sigma_{1 - 2s - 2k}(n,\bar\chi) - \frac{L_f(1-k,\bar\chi)}{L(2s,\bar\chi)L(2-2s-2k,\bar\chi)}\sigma_{1 - 2s - 2k}(n,\chi)\right)\\
  &\quad\quad\quad\quad\quad\quad\quad\quad\quad\quad\quad\quad\quad\quad\cdot |n|^{s+k-\frac{1}{2}}y^{\frac{1}{2}}K_{\frac{1}{2}-s-k}(2\pi|n|y)\\
  +&\frac{2^{2s+1}\pi^{2-s}}{N\Gamma(s)}\!\!\sum_{\rho: L(2\rho,\chi) = 0}\! \frac{\Gamma(s+\rho-1)\Gamma(s-\rho)}{\Gamma(1-\rho)\Gamma(\rho)}\frac{L_f(s+1-\rho)L_f(s+\rho,\chi)}{L(2s,\chi)}\underset{\! w = \rho}{\text{\emph{Res }}}\frac{1}{L(2w,\chi)}\,\sigma_{2\rho - 1}(n,\bar\chi)|n|^{\frac{1}{2} - \rho}y^{\frac{1}{2}}K_{\rho - \frac{1}{2}}(2\pi|n|y)\\
  -&\frac{2^{2s+1}\pi^{2-s}}{N\Gamma(s)}\!\!\sum_{\rho: L(2\rho,\bar\chi) = 0}\! \frac{\Gamma(s+\rho-1)\Gamma(s-\rho)}{\Gamma(1-\rho)\Gamma(\rho)}\frac{L_f(s+1-\rho)L_f(s+\rho,\bar\chi)}{L(2s,\bar\chi)}\underset{\! w = \rho}{\text{\emph{Res }}}\frac{1}{L(2w,\bar\chi)}\,\sigma_{2\rho - 1}(n,\chi)|n|^{\frac{1}{2} - \rho}y^{\frac{1}{2}}K_{\rho - \frac{1}{2}}(2\pi|n|y).
\end{align*}
\end{theorem}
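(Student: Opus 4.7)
The approach is a Selberg spectral decomposition after a cocycle substitution. Set $\tilde f(z):=2\pi i\int_{i\infty}^z f(w)\,dw=\sum_{m\geq 1}\frac{a_m}{m}e^{2\pi imz}$; the weight-$2$ transformation law for $f$ gives the cocycle relation $\tilde f(\gamma z)-\tilde f(z)=\langle\gamma,f\rangle$. Substituting this into the defining series for $E^*$ and splitting off the telescoping sum gives
\begin{equation*}
E^*(z,s,\chi)=\Psi(z,s,\chi)-\tilde f(z)E_\chi(z,s),\qquad \Psi(z,s,\chi):=\sum_{\Gamma_\infty\backslash\Gamma_0(N)}\chi(\gamma)\,\tilde f(\gamma z)\,\text{Im}(\gamma z)^s,
\end{equation*}
where $E_\chi(z,s):=\sum\chi(\gamma)\text{Im}(\gamma z)^s$. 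A direct check shows $\Psi$ is automorphic with nebentypus $\bar\chi$ and so admits a spectral expansion, while the correction $-\tilde f(z)E_\chi(z,s)$ is completely explicit: convolving the $q$-expansion of $\tilde f$ with the known Fourier expansion of $E_\chi$ reproduces the second line of the theorem term by term.

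For the discrete spectrum of $\Psi$, I unfold the Petersson inner product against a Maass cusp form $M_j$ of nebentypus $\bar\chi$:
\begin{equation*}
\langle\Psi,M_j\rangle=\int_0^\infty\!\!\int_0^1 \tilde f(x+iy)\,\overline{M_j(x+iy)}\,y^{s-2}\,dx\,dy.
\end{equation*}
Inserting the $q$-expansion of $\tilde f$ and the Whittaker expansion of $M_j$, performing the $x$-integral (a Kronecker $\delta$), and evaluating the $y$-integral via the standard formula $\int_0^\infty e^{-y}y^{a-1}K_\nu(y)\,dy=\sqrt\pi\,2^{-a}\Gamma(a+\nu)\Gamma(a-\nu)/\Gamma(a+\tfrac12)$ collapses the inner product to $L(s+\tfrac12,f\times\overline{M_j})$ multiplied by exactly the gamma factors shown in the first line of Theorem \ref{higherterms}; then reading off the $n$-th Fourier coefficient of $\sum_j\langle\Psi,M_j\rangle M_j/\langle M_j,M_j\rangle$ yields the first series.

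The continuous spectrum is the substantive step. For each of the two cusps $\mathfrak{a}\in\{\infty,0\}$ of $\Gamma_0(N)$, the same unfolding evaluates $\langle\Psi,E_\mathfrak{a}(\cdot,w,\bar\chi)\rangle$ in closed form as a ratio of Hecke and Dirichlet $L$-functions times $\Gamma(s+w-1)\Gamma(s-w)/\Gamma(s)$, with the reciprocal $L$-function $1/L(2w,\chi)$ appearing at one cusp and $1/L(2w,\bar\chi)$ at the other (the two being related by the scattering matrix, which is simple because $\chi$ has conductor exactly $N$). Inserting these into the spectral formula and extracting the $n$-th Fourier coefficient produces two contour integrals over $\text{Re}(w)=\tfrac12$. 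Shifting each contour to the right past $\text{Re}(w)=1$, and discarding the shifted integral by the decay of the gamma quotient on vertical lines, picks up two families of residues: (i) poles of $\Gamma(s+w-1)$ at $w=1-s-k$ for $k\geq 0$, which after simplification produce the $\sum_k$ series, and (ii) the simple poles of $1/L(2w,\chi)$ and $1/L(2w,\bar\chi)$ at the nontrivial zeros of these Dirichlet $L$-functions, which give the final two $\sum_\rho$ series.

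The main obstacle is the continuous-spectrum calculation: verifying that each Eisenstein inner product evaluates to exactly the stated closed form, justifying the contour shift by uniform bounds on vertical strips, and matching each residue to its precise gamma factor together with the correct divisor-sum pattern $\sigma_{2\rho-1}(n,\chi)$ versus $\sigma_{2\rho-1}(n,\bar\chi)$ coming from the two-cusp structure with opposite characters.
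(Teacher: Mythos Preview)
Your approach matches the paper's essentially step for step: the paper writes $E^*(z,s,\chi)=D(z,s,\chi)-A(z)E(z,s,\chi)$ with $D$ playing the role of your $\Psi$ and $A$ the role of your $\tilde f$, applies the Selberg spectral decomposition to $D$, unfolds to compute $\langle D,M_j\rangle$ and $\langle D,E_\fa\rangle$ (Lemmas~2.3--2.5), and then shifts the contour in the continuous-spectrum integral to pick up gamma-pole and Dirichlet-zero residues.

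There is one genuine slip in your write-up. You say to shift the contour to the \emph{right} past $\text{Re}(w)=1$, but the residues you list --- $w=1-s-k$ for $k\geq 0$ from $\Gamma(s+w-1)$, and the nontrivial zeros of $L(2w,\chi)$, which lie in $0<\text{Re}(w)<\tfrac12$ --- are all to the \emph{left} of the original contour $\text{Re}(w)=\tfrac12$ (recall $\text{Re}(s)>1$). The paper shifts left, and your residue list only makes sense for a leftward shift. Moreover, you do not merely shift past a fixed line and discard the moved integral: to account for all the gamma poles and all the Dirichlet zeros you must push the contour all the way to $-\infty$, relying on the decay of the integrand away from the poles to kill the shifted integral in the limit. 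A minor notational point: in the paper's convention the Eisenstein series appearing in the spectral decomposition of $D$ is written $E_\fa(\cdot,w,\chi)$ (defined with $\chi(\gamma)$ in the sum), and this already has nebentypus $\bar\chi$; writing $E_\fa(\cdot,w,\bar\chi)$ would give the wrong character, so make sure your conventions are consistent when you carry out the inner-product computations.
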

~\\
Theorem \ref{constterm} was derived in recent work of Bruggeman and Diamantis \cite{bd16}. They also give an expression for the higher Fourier coefficients of these Eisenstein series, but in terms of a shifted convolution sum involving the Fourier coefficients of $f$ and the sum of divisors function $\sigma_s(n,\chi)$, and this leads to different applications.\\
~\\ 
Theorems \ref{constterm} and \ref{higherterms} make calculations with $E^*(z,s,\chi)$ straightforward. Using standard techniques we then prove theorem \ref{geometricordering} in section \ref{applicationssection}.\\
\\
As another application of theorems \ref{constterm} and \ref{higherterms}, we evaluate certain Kloosterman sums involving modular symbols. The Fourier coefficients of the classical Eisenstein series
$$E(z,s,\chi) := \sum_{\Gamma_\infty\backslash\Gamma_0(N)}\chi(\gamma)\text{Im}(\gamma z)^s$$
are often expressed in terms of the Kloosterman sums
\begin{align*}
  \phi_\chi(n,s) &:= \frac{\pi^s}{\Gamma(s)}|n|^{s-1}\sum_{\left(\begin{smallmatrix}a&b\\c&d\end{smallmatrix}\right) \in \Gamma_\infty\backslash\Gamma_0(N)/\Gamma_\infty}\frac{\chi(d)}{|c|^{2s}}e^{2\pi i n \frac{a}{c}}\\
  \shortintertext{for $n \neq 0$, and}
  \phi_\chi(0,s) &:= \sqrt{\pi}\frac{\Gamma\!\left(s - \frac{1}{2}\right)}{\Gamma(s)}\sum_{\left(\begin{smallmatrix}a&b\\c&d\end{smallmatrix}\right) \in \Gamma_\infty\backslash\Gamma_0(N)/\Gamma_\infty}\frac{\chi(d)}{|c|^{2s}}.\\
\end{align*}
Similarly, the Fourier coefficients of the Eisenstein series twisted by modular symbols $E^*(z,s,\chi)$ can be given in terms of Kloosterman sums twisted by modular symbols \cite{osul00}:
$$E^*(z,s,\chi) = \phi^*_\chi(0,s)y^{1-s} + \sum_{n\neq 0}\phi^*_\chi(n,s)\cdot 2|n|^{\frac{1}{2}}y^{\frac{1}{2}}K_{s-\frac{1}{2}}(2\pi|n|y)e^{2\pi i n x},$$
where
\begin{align*}
  \phi^*_\chi(n,s) &:= \frac{\pi^s}{\Gamma(s)}|n|^{s-1}\sum_{\left(\begin{smallmatrix}a&b\\c&d\end{smallmatrix}\right) \in \Gamma_\infty\backslash\Gamma_0(N)/\Gamma_\infty}\frac{\chi(d)}{|c|^{2s}}e^{2\pi i n \frac{a}{c}}\Big\langle \left(\begin{smallmatrix}a&b\\c&d\end{smallmatrix}\right), f\Big\rangle\\
  \shortintertext{for $n \neq 0$, and}
  \phi^*_\chi(0,s) &:= \sqrt{\pi}\frac{\Gamma\!\left(s - \frac{1}{2}\right)}{\Gamma(s)}\sum_{\left(\begin{smallmatrix}a&b\\c&d\end{smallmatrix}\right) \in \Gamma_\infty\backslash\Gamma_0(N)/\Gamma_\infty}\frac{\chi(d)}{|c|^{2s}}\Big\langle \left(\begin{smallmatrix}a&b\\c&d\end{smallmatrix}\right),f\Big\rangle.
\end{align*}
The Kloosterman sums involved in the classical case can be evaluated, but no evaluation of the Kloosterman sums involving modular symbols above has appeared in the literature. By comparison with theorems \ref{constterm} and \ref{higherterms}, we now obtain closed form expressions for these sums:
\begin{corollary}\label{kloosterman} Let $N$ be a prime, let $\chi$ be an even character of conductor $N$, and let $f$ be a cusp form of weight $2$ for $\Gamma_0(N)$. Then the Kloosterman sums $\phi^*_\chi(n,s)$ appearing above are given by
  $$\phi^*_\chi(0,s) = 2 N^{-2s}L_f(2s)\left(\frac{\tau(\bar\chi)L_f(1,\chi)}{L(2s,\chi)} - \frac{\tau(\chi)L_f(1,\bar\chi)}{L(2s,\bar\chi)}\right)$$
  and, for $n \neq 0$
  \begin{align*}
    &\phi^*_\chi(n,s)\cdot 2|n|^{\frac{1}{2}}y^{\frac{1}{2}}K_{s-\frac{1}{2}}(2\pi|n|y)\\
    &=\sum_j \frac{(4\pi)^{1-s}}{2\langle M_j, M_j\rangle}\frac{\Gamma\left(s+i\lambda_j-\frac{1}{2}\right)\Gamma\left(s-i\lambda_j-\frac{1}{2}\right)}{\Gamma(s)}L\left(s+\frac{1}{2},f\times\overline{M_j}\right) \cdot c_{M_j}(n)y^{\frac{1}{2}}K_{i\lambda_j}(2\pi|n|y)\\
  &-\sum_{m = 1}^{\infty}\frac{a_m}{m}e^{-2\pi m y}\frac{4\pi^s\tau(\chi)}{N^{2s}\Gamma(s)}|n-m|^{\frac{1}{2} - s}\sigma_{2s - 1}(n-m,\bar\chi)y^{\frac{1}{2}}K_{s - \frac{1}{2}}(2\pi|n-m|y)\\
  &+\frac{2^{2s+1}\pi^{2-s}}{N\Gamma(s)}\sum_{k=0}^\infty\frac{\Gamma(2s+k-1)}{k!\Gamma(s+k)\Gamma(1-s-k)}\\
  &\quad\quad\quad\quad\quad\quad\quad\cdot L_f(k)\left(\frac{L_f(1-k,\chi)}{L(2s,\chi)L(2-2s-2k,\chi)}\sigma_{1 - 2s - 2k}(n,\bar\chi) - \frac{L_f(1-k,\bar\chi)}{L(2s,\bar\chi)L(2-2s-2k,\bar\chi)}\sigma_{1 - 2s - 2k}(n,\chi)\right)\\
  &\quad\quad\quad\quad\quad\quad\quad\quad\quad\quad\quad\quad\quad\quad\cdot |n|^{s+k-\frac{1}{2}}y^{\frac{1}{2}}K_{\frac{1}{2}-s-k}(2\pi|n|y)\\
  &+\frac{2^{2s+1}\pi^{2-s}}{N\Gamma(s)}\!\!\sum_{\rho: L(2\rho,\chi) = 0}\! \frac{\Gamma(s+\rho-1)\Gamma(s-\rho)}{\Gamma(1-\rho)\Gamma(\rho)}\frac{L_f(s+1-\rho)L_f(s+\rho,\chi)}{L(2s,\chi)}\underset{\! w = \rho}{\text{\emph{Res }}}\frac{1}{L(2w,\chi)}\,\sigma_{2\rho - 1}(n,\bar\chi)|n|^{\frac{1}{2} - \rho}y^{\frac{1}{2}}K_{\rho - \frac{1}{2}}(2\pi|n|y)\\
  &-\frac{2^{2s+1}\pi^{2-s}}{N\Gamma(s)}\!\!\sum_{\rho: L(2\rho,\bar\chi) = 0}\! \frac{\Gamma(s+\rho-1)\Gamma(s-\rho)}{\Gamma(1-\rho)\Gamma(\rho)}\frac{L_f(s+1-\rho)L_f(s+\rho,\bar\chi)}{L(2s,\bar\chi)}\underset{\! w = \rho}{\text{\emph{Res }}}\frac{1}{L(2w,\bar\chi)}\,\sigma_{2\rho - 1}(n,\chi)|n|^{\frac{1}{2} - \rho}y^{\frac{1}{2}}K_{\rho - \frac{1}{2}}(2\pi|n|y).
  \end{align*}
\end{corollary}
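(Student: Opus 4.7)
The plan is a direct identification of Fourier coefficients. Since $E^*(x+iy,s,\chi)$ is invariant under $x \mapsto x+1$, it admits a unique Fourier expansion in $x$, and the Kloosterman-sum expansion recalled above from O'Sullivan and the expansions computed in theorems \ref{constterm} and \ref{higherterms} are simply two representations of the same coefficients. Matching them term by term yields the identities asserted in the corollary.

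In detail: for the constant term, integrating the expansion in $x$ over $[0,1]$ kills every non-constant exponential by orthogonality, leaving $\phi^*_\chi(0,s)\,y^{1-s}$; equating this with theorem \ref{constterm} and reading off the coefficient of $y^{1-s}$ delivers the stated formula for $\phi^*_\chi(0,s)$. For $n \neq 0$, multiplying the expansion by $e^{-2\pi i n x}$ and integrating over $[0,1]$ isolates $\phi^*_\chi(n,s)\cdot 2|n|^{\frac{1}{2}}y^{\frac{1}{2}}K_{s-\frac{1}{2}}(2\pi|n|y)$, which must then equal the right-hand side of theorem \ref{higherterms}; this is precisely the second assertion of the corollary. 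Note that although the right-hand side of theorem \ref{higherterms} mixes several Bessel functions $K_{i\lambda_j}$, $K_{s-\frac{1}{2}}$, $K_{\frac{1}{2}-s-k}$, $K_{\rho-\frac{1}{2}}$, etc., this is not an obstacle: the equality is as functions of $y$, and one is not trying to extract the coefficient of a particular Bessel function.

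The only substantive comment concerns the region of validity. The series defining $\phi^*_\chi(n,s)$ converges absolutely for $\text{Re}(s)$ sufficiently large, using the polynomial bound $\langle\gamma,f\rangle \ll |\!|\gamma|\!|_z^\eps$ of Petridis--Risager, so the identity holds there; both sides are meromorphic in $s$, so the full identity extends by analytic continuation. Consequently there is no genuine obstacle: the corollary is an immediate consequence of theorems \ref{constterm} and \ref{higherterms}, with all of the analytic work already performed in the spectral decomposition used to prove those theorems.
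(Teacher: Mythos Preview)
Your proposal is correct and follows essentially the same route as the paper: both identify the Fourier coefficients of $E^*(z,s,\chi)$ coming from O'Sullivan's expansion with those computed in theorems \ref{constterm} and \ref{higherterms} by integrating against $e^{-2\pi i n x}$ over $[0,1]$. Your additional remarks on the mixed Bessel functions and on analytic continuation are helpful but not needed for the argument.
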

~\\
As a straightforward consequence of corollary \ref{kloosterman}, we can prove the following statement, which parallels theorem \ref{geometricordering}:
\begin{theorem}\label{arithmeticordering} Let $N$ be a prime, let $\chi$ be an even character of conductor $N$, and let $f$ be a cusp form of weight $2$ for $\Gamma_0(N)$. Then
  \begin{align*}
    \sum_{\substack{\gamma \in \Gamma_\infty\backslash\Gamma_0(N)/\Gamma_\infty\\|c| \,<\, X^{\frac{1}{2}}}}\chi(\gamma)\langle\gamma,f\rangle
    \quad=&\quad\sum_{\substack{\rho: L(\rho,\chi) = 0\\\rho\neq 0}} \frac{8\pi i N^{\rho}}{\rho}\tau(\bar\chi)L_f(1,\chi)L_f(\rho)\,\underset{\! s = \frac{\rho}{2}}{\text{\emph{Res }}}\frac{1}{L(2s,\chi)}\cdot X^{\frac{\rho}{2}}\\
    &-\sum_{\substack{\rho: L(\rho,\bar\chi) = 0\\\rho\neq 0}} \frac{8\pi i N^{\rho}}{\rho}\tau(\chi)L_f(1,\bar\chi)L_f(\rho)\,\underset{\! s = \frac{\rho}{2}}{\text{\emph{Res }}}\frac{1}{L(2s,\bar\chi)}\cdot X^{\frac{\rho}{2}}\\
    &+\,4\pi i \left(\tau(\bar\chi)L_f(1,\chi)\frac{L_f'(0)}{L'(0,\chi)} - \tau(\chi)L_f(1,\bar\chi)\frac{L_f'(0)}{L'(0,\bar\chi)}\right),
  \end{align*}
  where $c$ denotes the lower left entry of $\gamma$.
\end{theorem}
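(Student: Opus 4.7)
The plan is to interpret the left-hand side as a partial sum of the Dirichlet series
$$D(s) := \sum_{\gamma \in \Gamma_\infty \backslash \Gamma_0(N)/\Gamma_\infty} \frac{\chi(\gamma)\langle \gamma,f\rangle}{|c|^{2s}},$$
which converges absolutely in a right half-plane. By the definition of $\phi^*_\chi(0,s)$,
$$D(s) = \frac{\Gamma(s)}{\sqrt{\pi}\,\Gamma(s - \tfrac{1}{2})}\,\phi^*_\chi(0,s),$$
and corollary \ref{kloosterman} provides the closed-form meromorphic continuation
$$D(s) = \frac{2\,\Gamma(s)\, N^{-2s}\, L_f(2s)}{\sqrt{\pi}\,\Gamma(s - \tfrac{1}{2})}\!\left(\frac{\tau(\bar\chi)\,L_f(1,\chi)}{L(2s,\chi)} - \frac{\tau(\chi)\,L_f(1,\bar\chi)}{L(2s,\bar\chi)}\right)$$
to the whole $s$-plane, so the tools of contour integration are available.

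First I would apply a truncated Perron formula to write
$$\sum_{\substack{\gamma \in \Gamma_\infty \backslash \Gamma_0(N)/\Gamma_\infty\\|c| < X^{1/2}}} \chi(\gamma)\langle\gamma,f\rangle = \frac{1}{2\pi i}\int_{\sigma - iT}^{\sigma + iT} D(s)\,\frac{X^s}{s}\, ds + \text{(Perron error)}$$
for $\sigma$ to the right of the abscissa of absolute convergence. I would then deform the contour leftward past $\text{Re}(s) = 0$ to a line $\text{Re}(s) = -\delta$ with $0 < \delta < 1$, and collect residues by the residue theorem. The poles crossed within the rectangle match the terms in the stated identity: for each non-trivial zero $\rho$ of $L(\cdot,\chi)$, the factor $1/L(2s,\chi)$ has a simple pole at $s = \rho/2$ and the residue of $D(s)X^s/s$ yields an $X^{\rho/2}$ main term weighted by $L_f(\rho)$ and $\text{Res}_{s=\rho/2}\!\left[1/L(2s,\chi)\right]$, with the analogous statement holding for $L(\cdot,\bar\chi)$. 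At $s = 0$ the product $\Gamma(s) L_f(2s)$ is regular (the simple pole of $\Gamma$ cancels the zero $L_f(0) = 0$ forced by the functional equation of $L_f$), but $1/L(2s,\chi)$ and $1/L(2s,\bar\chi)$ each contribute simple poles since $L(0,\chi) = L(0,\bar\chi) = 0$ for even $\chi$, and combined with $1/s$ this extracts the constant term in $L_f'(0)/L'(0,\chi)$ and $L_f'(0)/L'(0,\bar\chi)$.

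Converting the raw residues to the exact form in the statement is a bookkeeping exercise: the gamma prefactor $\Gamma(\rho/2)/\Gamma\!\left(\rho/2 - 1/2\right)$ collapses via the Legendre duplication formula, the functional equation of $L_f$ lets one rewrite $N^{-\rho}L_f(\rho)$ with the opposite $N$-normalisation appearing as $N^\rho$ in the statement, and the identity $L'(0,\chi) = \tau(\chi)L(1,\bar\chi)/2$ coming from the functional equation of $L(s,\chi)$ at $s = 0$ handles the constant term's prefactor.

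The main obstacle will be controlling the shifted vertical integral and the horizontal connectors $[\sigma\pm iT,-\delta\pm iT]$ uniformly in $T$, and simultaneously ensuring that the sum over zeros converges in a suitable sense. This requires polynomial growth of $D(s)$ in vertical strips, which follows from standard convexity bounds on $L(s,\chi)$, $L(s,\bar\chi)$, and $L_f(s)$, together with Stirling's estimate for the gamma quotient $\Gamma(s)/\Gamma(s-1/2)$. By selecting the truncation height $T$ to avoid clusters of ordinates of zeros and letting $T \to \infty$ with $\delta$ small but fixed, both the Perron truncation error and the shifted contour integral become lower-order, leaving precisely the residue expansion claimed.
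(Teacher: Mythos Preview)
Your strategy is exactly the paper's: recognize the left side as a Perron sum for $D(s)=\sum_\gamma \chi(\gamma)\langle\gamma,f\rangle\,|c|^{-2s}$, replace $D(s)$ by the closed form coming from Corollary~\ref{kloosterman}, and shift the contour left to pick up residues at $s=\rho/2$ and at $s=0$. The paper does this in two lines with no truncation or error analysis; your version is the same argument dressed with more analytic care about the truncated Perron formula and the horizontal contours.

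One correction to your bookkeeping paragraph: the prefactor $\Gamma(s)/\bigl(\sqrt\pi\,\Gamma(s-\tfrac12)\bigr)$ you carry is spurious. If you compare the $n=0$ line of Corollary~\ref{kloosterman} against Theorem~\ref{constterm} and the definition of $\phi^*_\chi(0,s)$, you will see that the gamma ratio has already been cancelled there, so that the raw Dirichlet series is simply
\[
D(s)=2N^{-2s}L_f(2s)\left(\frac{\tau(\bar\chi)L_f(1,\chi)}{L(2s,\chi)}-\frac{\tau(\chi)L_f(1,\bar\chi)}{L(2s,\bar\chi)}\right),
\]
and this is what the paper's proof integrates. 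With this expression the residues at $s=\rho/2$ and $s=0$ match the statement directly (using only $L_f(0)=0$ and $L(0,\chi)=0$ at the origin), and the Legendre duplication formula and the functional equation of $L_f$ that you anticipate are not needed; indeed, with your extra gamma ratio in place those manipulations would not actually reconcile the residues with the theorem.
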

\noindent\\
Recent work of Nordentoft \cite{nor18} gives more general results of this type using techniques similar to those used by Petridis and Risager in \cite{petris18}.\\
\\
In the initial work of Goldfeld \cite{go99}, as well as the subsequent work of O'Sullivan \cite{osul00} and Petridis and Risager \cite{petris04}, statistics of modular symbols were considered with the ordering $$\begin{pmatrix}a&b\\c&d\end{pmatrix} \in \Gamma_0(N),\quad c^2 + d^2 < X.$$
More recently, however, Mazur and Rubin \cite{mazrub16} have suggested studying the statistics of modular symbols using the ordering
$$\begin{pmatrix}a&b\\c&d\end{pmatrix} \in \Gamma_0(N),\quad |c| < X,\,\,\, d \,\,(\text{mod}\,c),\,\,\, (c,d) = 1,$$
and much of the recent work on statistics of modular symbols, such as that of Petridis and Risager \cite{petris18}, Diamantis, Hoffstein, Kiral, and Lee \cite{dhkl18}, and Nordentoft \cite{nor18}, uses this ordering instead. We will refer to these orderings as the \textit{geometric} and \textit{arithmetic} orderings of modular symbols respectively.\\
\\
In theorem \ref{geometricordering}, the geometric ordering is used, and, assuming the Riemann hypothesis for Dirichlet $L$-functions and that there is no ``coordination'' between the zeros of $L(s,\chi)$ causing additional cancellation as described above, this geometric ordering leads to a twisted sum of $X$ modular symbols having size roughly $X^{\frac{3}{4}}$. In theorem \ref{arithmeticordering}, however, the arithmetic ordering is used, and the resulting twisted sum of $X$ modular symbols has size roughly $X^{\frac{1}{4}}$. In neither case do we observe ``square root cancellation'', which would yield sums of size $X^{\frac{1}{2}+\eps}$. It is noteworthy, however, that real characters specifically do exhibit square root cancellation under the geometric ordering of theorem \ref{geometricordering}.\\
\\
The phenomenon of square root cancellation arises when random variables are summed in a broad (but not completely universal) sense. Sums in number theory are not over random variables, but square root cancellation is nevertheless observed frequently, because in many situations a sum is over a collection of arithmetic objects ordered in a certain way, and the quantity being summed is unrelated to that ordering. For example, letting $\mu$ denote the M\"{o}bius function, the Riemann hypothesis is equivalent to the statement
$$\sum_{n < X}\mu(n) = \cO(X^{\frac{1}{2}+\eps})$$
for all $\eps > 0$. If one were to sum $X$ random variables which are $1$ half the time and $-1$ half the time, then one would obtain the same result, so the statement above can be viewed as $\mu$ behaving randomly when its integer arguments are ordered by absolute value, or that there is no correlation between the absolute value of an integer and the parity of the number of factors it has. From this perspective, theorems \ref{geometricordering} and \ref{arithmeticordering} are statements that twisted sums of modular symbols behave non-randomly when the modular symbols are ordered using either the geometric ordering or the arithmetic ordering.\\
\subsection*{Acknowledgments} This work was done as part of my PhD thesis under the guidance of Dorian Goldfeld. Dorian was a phenomenal advisor to me, and I am extremely thankful to him for being patient, insightful, and, above all, supportive. I would also like to thank Asbj{\o}rn Nordentoft for sharing his expertise with me on numerous occasions and making many helpful comments and suggestions for this paper.\\

\section{Spectral decomposition}\label{fourierexpansionssection}
~\\
Define $A(z) = 2\pi i\int_{i\infty}^zf(w)\,dw$. The approach is to introduce the function $D(z,s,\chi)$, defined as
$$D(z,s,\chi) := \sum_{\Gamma_\infty\backslash\Gamma_0(N)}\chi(\gamma)A(\gamma z)\text{Im}(\gamma z)^s .$$
It's easily verified that $D(\alpha z,s,\chi) = \bar\chi(\alpha)D(z,s,\chi)$ and $E^*(z,s,\chi) = D(z,s,\chi) - A(z)E(z,s,\chi)$. Moreover $D(z,s,\chi)$ is $L^2$ for $\text{Re}(s) > 1$ (\cite{petris18} $\S$ 3).\\
\\
We give a Fourier expansion of $E^*(z,s,\chi)$ for even $\chi$ by getting an explicit spectral decomposition for $D(z,s,\chi)$ and using $E^*(z,s,\chi) = D(z,s,\chi) - A(z)E(z,s,\chi)$. The main obstacle to extending to non-prime $N$ and odd $\chi$ is obtaining Fourier expansions for $E(z,s,\chi)$ in those cases. Recent work of Young \cite{young17} may be helpful if one wishes to do this.\\
\\
Let $M_j(z)$, $j = 1,2,...$ be an orthogonal basis of Maass forms on $\Gamma_0(N)$ which transform as $M_j(\gamma z) = \bar\chi(\gamma)M_j(z)$ and normalized such that their Fourier coefficients are equal to their Hecke eigenvalues. The Selberg spectral decomposition \cite{hej83} for $D(z,s,\chi)$ then gives
$$D(z,s,\chi) = \sum_j \frac{\big\langle D(*,s,\chi), M_j\big\rangle}{\big\langle M_j,M_j\big\rangle} M_j(z) + \frac{1}{4\pi i}\sum_{\fa}\int_{\left(\frac{1}{2}\right)}\big\langle D(*,s,\chi), E_\fa(*,w,\chi)\big\rangle E_\fa(z,w,\chi) dw$$
where
$$\big\langle f,g\big\rangle = \int_{\Gamma_0(N)\backslash\mathfrak{h}}f(z)\overline{g(z)}\,\frac{dxdy}{y^2}$$
is the Petersson inner product, and
$$E_\fa(z,s,\chi) := \sum_{\Gamma_\fa\backslash\Gamma_0(N)}\chi(\gamma)\text{Im}(\sigma_\fa^{-1}\gamma z)^s ,$$
with $\sigma_\fa$ the matrix such that $\sigma_\fa\Gamma_\infty\sigma_\fa^{-1}$ is the stabilizer $\Gamma_\fa$ of the cusp $\fa$, and $\sigma_\fa\infty = \fa$. If $N$ is prime, there are only two cusps, $i\infty$ and $0$, and $\sigma_0 = \left(\begin{smallmatrix}0&-1\\N&0\end{smallmatrix}\right)$.\\
\\
When dealing with Fourier expansions of Eisenstein series it is often more convenient to work with ``completed'' Eisenstein series, given by $L(2s,\chi)E(z,s,\chi)$. Using completed Eisenstein series in place of regular Eisenstein series in the spectral decomposition formula instead gives
\begin{align*}
  D(z,s,\chi) = &\sum_j \frac{\big\langle D(*,s,\chi), M_j\big\rangle}{\big\langle M_j,M_j\big\rangle} M_j(z)\\
  + &\frac{1}{4\pi i}\int_{\left(\frac{1}{2}\right)}\frac{\big\langle D(*,s,\chi), E_{i\infty}(*,w,\chi)\big\rangle}{L(2w,\chi)} E_{i\infty}(z,w,\chi) dw\\
  + &\frac{1}{4\pi i}\int_{\left(\frac{1}{2}\right)}\frac{\big\langle D(*,s,\chi), E_0(*,w,\chi)\big\rangle}{L(2-2w,\bar\chi)} E_0(z,w,\chi) dw
\end{align*}
when $N$ is prime. We for the rest of this section we will use this form of the spectral decomposition, using completed Eisenstein series.\\
\\
\\
To proceed we need:
\begin{itemize}
\item A Fourier expansion for $M_j(z)$.
\item A Fourier expansion for $E_\fa(z,w,\chi)$.
\item The evaluation of $\big\langle D(*,s,\chi), M_j\big\rangle$.
\item The evaluation of $\big\langle D(*,s,\chi), E_\fa(*,w,\chi)\big\rangle$.
\end{itemize}
~\\
We give these in lemmas \ref{maassexpansion} through \ref{eisensteininnerprod0}. From this spectral expansion for $D(z,s,\chi)$ and the Fourier expansions of $E(z,s,\chi)$ and $M_j(z)$ we obtain the Fourier expansion of $D(z,s,\chi)$.\\
\\
From there, we can obtain the full Fourier expansion of $E^*(z,s,\chi)$. The $n^{\text{th}}$ Fourier coefficient is
$$\int_0^1 (D(z,s,\chi) - A(z)E(z,s,\chi))e^{-2\pi i n x} dx ,$$
which is easily evaluated using the Fourier expansions of $D(z,s,\chi)$, $E(z,s,\chi)$, and $f(z)$. This yields theorems \ref{constterm} and \ref{higherterms}.\\
\begin{lemma}\label{maassexpansion} The Fourier expansion of a Maass form $M_j$ of eigenvalue $1/4 + \lambda_j^2$ is given by
$$M_j(z) = \sum_{n\neq 0}c_{M_j}\!(n)y^{\frac{1}{2}}K_{i\lambda_j}(2\pi|n|y)e^{2\pi i n x}$$
where $K_v(y)$ is the $K$-Bessel function, defined as
$$K_v(y) = \frac{1}{2}\int_0^\infty\exp\!\left(\frac{1}{2}y(u + u^{-1})\right)u^v\frac{du}{u}.$$
\end{lemma}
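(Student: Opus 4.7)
The plan is to derive the Fourier expansion from the two defining properties of $M_j$: its transformation under $\Gamma_0(N)$ and its being an eigenfunction of the hyperbolic Laplacian with eigenvalue $\tfrac{1}{4} + \lambda_j^2$. First, since $T = \bigl(\begin{smallmatrix}1&1\\0&1\end{smallmatrix}\bigr) \in \Gamma_0(N)$ has lower-right entry $1$, we have $\bar\chi(T) = 1$, so $M_j(z+1) = M_j(z)$. This periodicity in $x$ gives an expansion
$$M_j(z) = \sum_{n \in \Z} a_n(y) e^{2\pi i n x}$$
for some functions $a_n(y)$, obtained by Fourier inversion over the period $[0,1]$.

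Next I would substitute this expansion into the Laplace eigenvalue equation $\Delta M_j = (\tfrac{1}{4} + \lambda_j^2) M_j$ with $\Delta = -y^2(\partial_x^2 + \partial_y^2)$. Since the exponentials $e^{2\pi i n x}$ are linearly independent, each coefficient $a_n(y)$ must individually satisfy the ODE
$$-y^2 a_n''(y) + (2\pi n)^2 y^2 a_n(y) = \left(\tfrac{1}{4} + \lambda_j^2\right) a_n(y).$$
For $n \neq 0$, the substitution $a_n(y) = y^{1/2} W(2\pi|n| y)$ reduces this to the modified Bessel equation of order $i\lambda_j$, whose two-dimensional solution space is spanned by $K_{i\lambda_j}(2\pi|n|y)$ and $I_{i\lambda_j}(2\pi|n|y)$. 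For $n = 0$, the general solution is $a_0(y) = c_1 y^{1/2 + i\lambda_j} + c_2 y^{1/2 - i\lambda_j}$.

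Finally, I would invoke cuspidality (implicit in the statement that $M_j$ is a Maass form with Hecke eigenvalues as its Fourier coefficients, placing it in the discrete cuspidal spectrum) to eliminate the unwanted modes. Since $I_{i\lambda_j}(y)$ grows exponentially as $y \to \infty$ while $K_{i\lambda_j}(y)$ decays, the $I$-term must vanish; similarly, the power functions $y^{1/2 \pm i\lambda_j}$ do not vanish at the cusp $i\infty$, so $a_0 \equiv 0$. Collecting the surviving terms and labeling the arbitrary constant as $c_{M_j}(n)$ produces the claimed expansion. The only step with any subtlety is the justification that termwise application of $\Delta$ to the Fourier series is valid, which follows from the smoothness and rapid decay of $M_j$ in the cusp, allowing differentiation under the integral sign defining $a_n(y)$; I would cite this as standard.
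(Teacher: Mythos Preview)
Your argument is correct and is exactly the standard derivation of the Fourier--Whittaker expansion of a Maass cusp form. The paper itself does not give a proof at all: it simply cites Theorem 3.5.1 of Goldfeld's book \emph{Automorphic forms and $L$-functions for the group $\mathrm{GL}(n,\mathbb{R})$}, where precisely the argument you outline (periodicity in $x$, separation of variables leading to the modified Bessel ODE, and elimination of the $I_{i\lambda_j}$ and constant terms via moderate growth/cuspidality) is carried out. So your proposal is not a different route but rather an explicit write-up of the cited result; if anything, it supplies more detail than the paper does.
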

~\\
\begin{proof} \cite{go06}, Theorem 3.5.1.\end{proof}
~\\
We normalize $M_j(z)$ so that $c_{M_j}(1) = 1$. The quantity $\langle M_j, M_j\rangle$ will appear later, and from the work of Hoffstein and Lockhart \cite{hoflock94} for all $\eps > 0$ we have the bound
$$N^{-\eps}\cosh(\pi \lambda_j)^{\frac{1}{2}} \ll \langle M_j, M_j\rangle^{-1} \ll N^{\eps}\cosh(\pi \lambda_j)^{\frac{1}{2}} .$$
\begin{lemma}\label{eisensteinexpansion} The Fourier expansions of the completed Eisenstein series $E_\fa(z,w,\chi)$ for even $\chi$ are given by
\begin{align*}
  &E_{i\infty}(z,w,\chi) = 2y^wL(2w,\chi) + \frac{4\tau(\chi)\pi^w}{N^{2w}\Gamma(w)}y^{\frac{1}{2}}\sum_{n\neq 0}|n|^{\frac{1}{2} - w}\sigma_{2w - 1}(n,\bar\chi)K_{w - \frac{1}{2}}(2\pi|n|y)e^{2\pi i n x}\\
  &\text{and}\\
  &E_0(z,w,\chi) = \frac{2\tau(\chi)\pi^{2w-1}\Gamma\left(1-w\right)}{N^{1-3w}\Gamma(w)}y^{1-w}L(2-2w,\bar\chi) + \frac{4\pi^w}{N^w\Gamma(w)}y^{\frac{1}{2}}\sum_{n\neq 0}|n|^{w-\frac{1}{2}}\sigma_{1-2w}(n,\chi)K_{w-\frac{1}{2}}(2\pi|n|y)e^{2\pi i n x} .
\end{align*}
\end{lemma}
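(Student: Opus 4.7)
The plan is to compute both Fourier expansions by unfolding the coset sums and applying Poisson summation in the integer parameter, with primitivity of $\chi$ modulo $N$ essential for simplifying the resulting character sums.

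For $E_{i\infty}(z,w,\chi)$, I would parametrize $\Gamma_\infty \backslash \Gamma_0(N)$ by bottom rows $(c,d)$ with $c \geq 0$, $N \mid c$, $\gcd(c,d)=1$, and split $d = d_0 + kc$ with $d_0 \in (\mathbb{Z}/c)^*$ and $k \in \mathbb{Z}$. The standard Fourier identity
$$\sum_{k\in\mathbb{Z}}|z+k+d_0/c|^{-2w} = \frac{\sqrt\pi\,\Gamma(w-\tfrac12)}{\Gamma(w)}y^{1-2w} + \frac{2\pi^w}{\Gamma(w)}y^{\frac12-w}\sum_{n\neq 0}|n|^{w-\frac12}K_{w-\frac12}(2\pi|n|y)e^{2\pi i n(x+d_0/c)}$$
reduces the problem to evaluating twisted sums $S(n;c,\chi) := \sum_{d_0 \in (\mathbb{Z}/c)^*}\chi(d_0)e^{2\pi i n d_0/c}$. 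Writing $c = Nc'$ and applying the Chinese Remainder Theorem, primitivity of $\chi$ modulo $N$ forces $S(n;c,\chi) = 0$ unless $\gcd(c',N)=1$; in that case $S$ factors as $\tau(\chi)\bar\chi(n)\chi(c')c_{c'}(n)$ for $\gcd(n,N)=1$, and for $n=0$ the inner Gauss sum vanishes, explaining why no $y^{1-w}$ term appears in the expansion of $E_{i\infty}$. Summing $\chi(c')c_{c'}(n)(c')^{-2w}$ via the classical identity $\sum_{c\geq 1}\chi(c)c_c(n)c^{-s} = \sigma_{1-s}(n,\chi)/L(s,\chi)$, applying the symmetry $\sigma_{1-2w}(n,\chi) = \chi(n)n^{1-2w}\sigma_{2w-1}(n,\bar\chi)$ valid when $\gcd(n,N)=1$, and multiplying by the completion factor $L(2w,\chi)$ (which cancels the denominator) then yields the stated formula.

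For $E_0(z,w,\chi)$, cosets of $\Gamma_0\backslash\Gamma_0(N)$ are parametrized by top rows $(a,b)$ with $a \geq 1$, $\gcd(a,b) = \gcd(a,N) = 1$; the constraints $N \mid c$ and $ad-bc=1$ force $d \equiv a^{-1} \pmod{N}$, so $\chi(\gamma) = \bar\chi(a)$. Since $\text{Im}(\sigma_0^{-1}\gamma z) = y/(N|az+b|^2)$, the series becomes
$$E_0(z,w,\chi) = \frac{y^w}{N^w}\sum_{\substack{a\geq 1\\ (a,N)=1}}\bar\chi(a)\sum_{\substack{b\in\mathbb{Z}\\(a,b)=1}}\frac{1}{|az+b|^{2w}}.$$
Poisson summation in $b$ now produces untwisted Ramanujan sums $c_a(n)$ multiplied by $\bar\chi(a)$ (no Gauss sum arises because no character multiplies $b$), and the resulting Dirichlet series evaluates to $\sigma_{1-2w}(n,\bar\chi)/L(2w,\bar\chi)$ for $n\neq 0$ and $L(2w-1,\bar\chi)/L(2w,\bar\chi)$ for $n=0$ (using $\sum_a \bar\chi(a)\phi(a)a^{-2w} = L(2w-1,\bar\chi)/L(2w,\bar\chi)$). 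Multiplication by the completion factor $L(2-2w,\bar\chi)$ together with the functional equation for $L(\cdot,\bar\chi)$ yields the claimed closed form for both the constant and higher Fourier coefficients.

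The main obstacle is bookkeeping: tracking Gauss-sum factors $\tau(\chi)$ versus $\tau(\bar\chi)$, powers of $N$, gamma factors from functional equations, and converting between $\sigma_s(n,\chi)$ and $\sigma_{-s}(n,\bar\chi)$ via the standard symmetry. A cleaner alternative for $E_0$ is to invoke the Fricke involution: since $\sigma_0$ normalizes $\Gamma_0(N)$ with $\chi(\sigma_0\gamma\sigma_0^{-1}) = \bar\chi(\gamma)$ and $\sigma_0^{-1}\Gamma_0\sigma_0 = \Gamma_\infty$, one obtains $E_0(z,w,\chi) = E_{i\infty}(\sigma_0^{-1}z,w,\bar\chi)$, from which the $E_0$ expansion follows by substitution into the already-proved $E_{i\infty}$ expansion and re-expansion in the $x$ variable.
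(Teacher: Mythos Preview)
Your approach is sound but takes a different route from the paper. The paper's proof is essentially a citation: it invokes \cite{go81} for the raw Fourier expansions, then massages the result using the duplication formula $\Gamma(2w-1)=4^{w-1}\pi^{-1/2}\Gamma(w-\tfrac12)\Gamma(w)$ and the functional equation of $L(s,\chi)$, together with the matrix identity
\[
\begin{pmatrix}a&b\\c&d\end{pmatrix}\begin{pmatrix}0&-1\\N&0\end{pmatrix}=\begin{pmatrix}0&-1\\N&0\end{pmatrix}\begin{pmatrix}d&-c/N\\-bN&a\end{pmatrix}
\]
to pass from the expansion of $E\bigl(-1/(Nz),w,\chi\bigr)$ given in the reference to that of $E_0(z,w,\chi)$. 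Your ``cleaner alternative'' via the Fricke involution is exactly this last step; the rest of your proposal (unfolding plus Poisson summation plus evaluation of the twisted Ramanujan sums) is the from-scratch derivation that \cite{go81} presumably contains, so your argument is self-contained where the paper's is not.

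Two bookkeeping points to watch. First, the factor $L(2-2w,\bar\chi)$ in the $E_0$ formula is \emph{not} the completion factor you multiply by at the end; it arises from applying the functional equation to $L(2w-1,\bar\chi)$ after completing by (a multiple of) $L(2w,\bar\chi)$. Second, your Poisson computation for $E_0$ produces $\sum_a \bar\chi(a)c_a(n)a^{-2w}=\sigma_{1-2w}(n,\bar\chi)/L(2w,\bar\chi)$, whereas the target formula has $\sigma_{1-2w}(n,\chi)$; tracking down where this $\chi\leftrightarrow\bar\chi$ swap occurs (it is tied to how $\chi(\gamma)$ interacts with the specific normalization of $\sigma_0$ and the completion) is precisely the delicate bookkeeping you flagged, and it is worth resolving explicitly rather than leaving implicit.
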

\begin{proof}
\cite{go81} along with the identity
$$\Gamma(2w-1) = 4^{w-1}\pi^{-\frac{1}{2}}\Gamma\left(w-\frac{1}{2}\right)\Gamma(w)$$
and the functional equation
$$L(2w-1,\chi) = \frac{\tau(\chi)}{\sqrt{N}}\left(\frac{N}{\pi}\right)^{\frac{3}{2}-2w}\frac{\Gamma(1-w)}{\Gamma\left(w-\frac{1}{2}\right)}L(2-2w,\bar\chi) .$$
The reference gives a Fourier expansion for $E\left(\frac{-1}{Nz},w,\chi\right)$ instead of $E_0(z,w,\chi)$. However, for $\left(\begin{smallmatrix} a & b \\ c & d \end{smallmatrix}\right) \in \Gamma_0(N)$, we have
$$\begin{pmatrix} a & b \\ c & d\end{pmatrix} \begin{pmatrix} 0 & -1 \\ N & 0\end{pmatrix} = \begin{pmatrix} 0 & -1 \\ N & 0\end{pmatrix} \begin{pmatrix} d & -\frac{c}{N} \\ -bN & a\end{pmatrix} .$$
The rightmost matrix is an element of $\Gamma_0(N)$, and summing over all $\left(\begin{smallmatrix} a & b \\ c & d \end{smallmatrix}\right) \in \Gamma_0(N)$ is the same as summing over all $\left(\begin{smallmatrix} d & -\frac{c}{N} \\ -bN & a\end{smallmatrix}\right) \in \Gamma_0(N)$, so the Fourier expansion given is indeed for $E_0(z,s,\chi)$.
\end{proof}
\begin{lemma}\label{maassinnerprod} The inner product of $D(z,s,\chi)$ with the Maass form $M_j(z)$ is given by
$$\langle D(*,s,\chi), M_j\rangle = \frac{(4\pi)^{1-s}}{2}\frac{\Gamma\left(s+i\lambda_j-\frac{1}{2}\right)\Gamma\left(s-i\lambda_j-\frac{1}{2}\right)}{\Gamma(s)}L\left(s+\frac{1}{2},f\times\overline{M_j}\right) .$$
\end{lemma}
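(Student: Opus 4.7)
The plan is to compute $\langle D(*,s,\chi),M_j\rangle$ by the standard unfolding trick, then perform the resulting $x$- and $y$-integrals explicitly and identify the emerging Dirichlet series as the Rankin--Selberg convolution $L(s+\tfrac{1}{2},f\times\overline{M_j})$.

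First I would write out the inner product, substitute the definition of $D(z,s,\chi)$ as a sum over $\Gamma_\infty\backslash\Gamma_0(N)$, and apply the usual unfolding. The key check is that the character cancels: because $M_j(\gamma z) = \bar\chi(\gamma)M_j(z)$, conjugating gives $\overline{M_j(\gamma^{-1}w)} = \bar\chi(\gamma)\overline{M_j(w)}$, and this pairs with the factor $\chi(\gamma)$ inside $D$ to yield $|\chi(\gamma)|^2 = 1$. The integral collapses to
\[
\langle D(*,s,\chi),M_j\rangle = \int_0^\infty\!\!\int_0^1 A(z)\,y^{s-2}\,\overline{M_j(z)}\,dx\,dy.
\]

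Next I would substitute the Fourier expansions. From $f(z)=\sum_{n\geq 1}a_n e^{2\pi i n z}$ and $A(z)=2\pi i\int_{i\infty}^z f(w)\,dw$ termwise integration gives $A(z)=\sum_{n\geq 1}\frac{a_n}{n}e^{2\pi i n x}e^{-2\pi n y}$. Combining this with the Fourier expansion of $M_j$ from Lemma \ref{maassexpansion}, the $x$-integral over $[0,1]$ picks out the diagonal $m=n>0$ by orthogonality, leaving
\[
\langle D(*,s,\chi),M_j\rangle = \sum_{n=1}^\infty \frac{a_n\overline{c_{M_j}(n)}}{n}\int_0^\infty e^{-2\pi n y} y^{s-\frac{3}{2}}K_{i\lambda_j}(2\pi n y)\,dy.
\]

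For the $y$-integral I would change variables $u=2\pi n y$ and apply the classical Mellin--Barnes evaluation
\[
\int_0^\infty e^{-u} u^{\mu-1} K_\nu(u)\,du = \frac{\sqrt{\pi}}{2^{\mu}}\frac{\Gamma(\mu+\nu)\Gamma(\mu-\nu)}{\Gamma(\mu+\tfrac{1}{2})}
\]
with $\mu = s-\tfrac{1}{2}$ and $\nu = i\lambda_j$ (this is Gradshteyn--Ryzhik 6.621.3 specialized to $\alpha=\beta$). After collecting the powers of $2$ and $\pi$, the prefactor simplifies cleanly to $(4\pi)^{1-s}/2$, and the remaining Dirichlet series is $\sum_{n\geq 1}a_n\overline{c_{M_j}(n)} n^{-s-1/2} = L(s+\tfrac{1}{2},f\times \overline{M_j})$ in the convention used in the statement. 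This gives the stated formula.

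The only real obstacle is convergence and justification of interchanges of summation and integration: one needs $\mathrm{Re}(s)$ sufficiently large for the Dirichlet series to converge absolutely and for the unfolding to be valid, after which the identity extends by analytic continuation. Since $D(z,s,\chi)$ is $L^2$ for $\mathrm{Re}(s)>1$ (as noted in the excerpt, citing \cite{petris18}), the inner product is well-defined there and the unfolding manipulations are legitimate in that half-plane.
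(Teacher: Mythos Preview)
Your proposal is correct and follows essentially the same approach as the paper: unfold $D$ against $\overline{M_j}$, pick out the diagonal via the $x$-integral, and evaluate the remaining $y$-integral. The only difference is cosmetic: the paper opens the Bessel function via its integral representation $K_\nu(y)=\tfrac{1}{2}\int_0^\infty e^{-\frac{1}{2}y(u+u^{-1})}u^{\nu}\frac{du}{u}$, integrates first in $y$ and then in $u$, and lands on a form with $\Gamma(s-\tfrac{1}{2})/\Gamma(2s-1)$ before applying the duplication formula; your route via Gradshteyn--Ryzhik 6.621.3 reaches the $\Gamma(s)$ denominator directly and is slightly cleaner.
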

~\\
\begin{proof}
  We can evaluate $\langle D(*,s,\chi),M_j\rangle$ by unfolding $D(z,s,\chi)$:
\begin{align*}
  \langle D(*,s,\chi), M_j\rangle &= \int_{\Gamma_0(N)\backslash\mathfrak{h}}\sum_{\gamma\in\Gamma_\infty\backslash\Gamma_0(N)}\chi(\gamma)A(\gamma z)\text{Im}(\gamma z)^s\cdot \overline{M_j(z)}\frac{dxdy}{y^2}\\
  &= \int_0^1\int_0^\infty A(z)y^s\cdot \overline{M_j(z)}\frac{dxdy}{y^2}\\
  &= \int_0^1\int_0^\infty\sum_{n=1}^\infty \frac{a_n}{n}e^{-2\pi ny}e^{2\pi i n x}y^s\cdot \overline{c_{M_j}(n)}y^{\frac{1}{2}}\frac{1}{2}\int_0^\infty \exp\left(-\pi ny(u+u^{-1})\right)u^{-i\lambda_j-1}du\,e^{-2\pi i n x}\frac{dxdy}{y^2}\\
  &= \sum_{n=1}^\infty\frac{a_n \overline{c_{M_j}(n)}}{n}\frac{1}{2}\int_0^\infty\int_0^\infty\exp(-\pi ny(u + u^{-1} + 2)y^{s-\frac{3}{2}}u^{-i\lambda_j-1} du\,dy\\
  &= \sum_{n=1}^\infty\frac{a_n \overline{c_{M_j}(n)}}{n}\frac{1}{2}\int_0^\infty\frac{\Gamma\left(s-\frac{1}{2}\right)}{\left(\pi n(u+u^{-1}+2)\right)^{s-\frac{1}{2}}}u^{-i\lambda_j-1}du\\
  &= \frac{\pi^{\frac{1}{2}-s}}{2}\frac{\Gamma\left(s-\frac{1}{2}\right)\Gamma\left(s+i\lambda_j-\frac{1}{2}\right)\Gamma\left(s-i\lambda_j-\frac{1}{2}\right)}{\Gamma(2s-1)}\sum_{n=1}^\infty\frac{a_n \overline{c_{M_j}(n)}}{n^{s+\frac{1}{2}}}\\
  &= \frac{\pi^{\frac{1}{2}-s}}{2}\frac{\Gamma\left(s-\frac{1}{2}\right)\Gamma\left(s+i\lambda_j-\frac{1}{2}\right)\Gamma\left(s-i\lambda_j-\frac{1}{2}\right)}{\Gamma(2s-1)}L\left(s+\frac{1}{2},f\times\overline{M_j}\right)\\
  &= \frac{(4\pi)^{1-s}}{2}\frac{\Gamma\left(s+i\lambda_j-\frac{1}{2}\right)\Gamma\left(s-i\lambda_j-\frac{1}{2}\right)}{\Gamma(s)}L\left(s+\frac{1}{2},f\times\overline{M_j}\right) .
\end{align*}
The integral in $u$ was evaluated with Mathematica, and relies on the fact that the Mellin transform in $y$ of $K_v(y)$ is equal to $2^{\xi - 2}\Gamma\!\left(\frac{\xi + v}{2}\right)\Gamma\!\left(\frac{\xi - v}{2}\right)$, where $\xi$ is the variable of the Mellin transform \cite{grry}.
\end{proof}
\begin{lemma}\label{eisensteininnerprodinfty}The inner product of $D(z,s,\chi)$ with the Eisenstein series $E_{i\infty}(z,s,\chi)$ is given by
$$\langle D(*,s,\chi),E_{i\infty}(*,w,\chi)\rangle = 2\overline{\tau(\chi)}\pi^{\bar{w}-s+\frac{1}{2}}N^{-2\bar{w}}\frac{\Gamma\left(s-\frac{1}{2}\right)\Gamma(s-\bar{w})\Gamma(s+\bar{w}-1)}{\Gamma(2s-1)\Gamma(\bar{w})}\frac{L_f(s-\bar{w}+1,\chi)L_f(\bar{w}+s)}{L(2s,\chi)} .$$
\end{lemma}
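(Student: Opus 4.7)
The plan is to mirror the unfolding argument of Lemma~\ref{maassinnerprod}, with the completed Eisenstein series playing the role of the Maass form. Since both $D(z,s,\chi)$ and $E_{i\infty}(z,w,\chi)$ transform by the character $\bar\chi(\alpha)$ under $\alpha \in \Gamma_0(N)$, the integrand $D(z,s,\chi)\,\overline{E_{i\infty}(z,w,\chi)}$ is $\Gamma_0(N)$-invariant. Expanding $D$ by its defining series over $\Gamma_\infty\backslash\Gamma_0(N)$ and using the $\chi$-equivariance of $E_{i\infty}$ to cancel the weight $\chi(\gamma)$, the standard unfolding trick reduces the inner product to
$$\langle D(*,s,\chi),E_{i\infty}(*,w,\chi)\rangle = \int_0^\infty\!\int_0^1 A(z)\,y^{s-2}\,\overline{E_{i\infty}(z,w,\chi)}\,dx\,dy.$$
I would then perform the $x$-integration via Parseval. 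With $A(z)=\sum_{m\geq 1}(a_m/m)e^{2\pi i m x}e^{-2\pi m y}$ and the Fourier expansion of $E_{i\infty}$ from Lemma~\ref{eisensteinexpansion}, only the diagonal terms $n=m>0$ survive; the constant term of $\overline{E_{i\infty}}$ contributes nothing since $A$ has no zeroth Fourier mode. The needed conjugations $\overline{\tau(\chi)}=\tau(\bar\chi)$ (valid for even $\chi$), $\overline{K_{w-\frac{1}{2}}(y)}=K_{\bar w-\frac{1}{2}}(y)$, and $\overline{\sigma_{2w-1}(m,\bar\chi)}=\sigma_{2\bar w-1}(m,\chi)$ are immediate for real $y>0$.

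Next I would evaluate the remaining $y$-integral
$$\int_0^\infty y^{s-\frac{3}{2}}e^{-2\pi m y}K_{\bar w - \frac{1}{2}}(2\pi m y)\,dy$$
by substituting $u = 2\pi m y$, inserting the integral representation $K_\nu(u)=\tfrac{1}{2}\int_0^\infty e^{-u(t+t^{-1})/2}t^{\nu-1}dt$, and reducing to a Beta integral. This yields $2^{1-2s}\pi^{1-s}m^{\frac{1}{2}-s}\,\Gamma(s+\bar w-1)\Gamma(s-\bar w)/\Gamma(s)$. At this stage the inner product has been reduced to an explicit prefactor times the Dirichlet series $\sum_{m\geq 1}a_m\,\sigma_{2\bar w-1}(m,\chi)\,m^{-(s+\bar w)}$, convergent for $\mathrm{Re}(s)$ and $\mathrm{Re}(w)$ sufficiently large.

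Finally, I would identify this Dirichlet series with $L_f(s+\bar w)\,L_f(s-\bar w+1,\chi)/L(2s,\chi)$ via the standard Ramanujan-type convolution identity for Hecke eigenforms. The identity is verified by a local Euler-factor computation: at $p\nmid N$ one expands in Satake parameters $\alpha_p,\beta_p$ with $\alpha_p\beta_p=p$, while at $p=N$ the character vanishes so $\sigma_v(N^k,\chi)=1$ and all $\chi$-twisted local factors trivialize, reducing both sides to $(1-a_N N^{-(s+\bar w)})^{-1}$. Combining all factors and applying Legendre duplication in the form $\Gamma(2s-1)=2^{2s-2}\pi^{-1/2}\,\Gamma(s-\tfrac{1}{2})\Gamma(s)$ to rewrite $1/\Gamma(s)$ recovers the formula in the statement. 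The main obstacle is simply the bookkeeping of the various powers of $2$, $\pi$, and $N$ together with the many gamma factors; no single step is deep, but the prefactor only assembles into the requested form if every constant is tracked precisely.
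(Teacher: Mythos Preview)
Your proposal is correct and follows essentially the same route as the paper: unfold $D$, insert the Fourier expansion of $E_{i\infty}$, integrate in $x$ to isolate the diagonal, evaluate the resulting $y$-integral against the $K$-Bessel function, and then identify the Dirichlet series $\sum_{m}a_m\,\sigma_{2\bar w-1}(m,\chi)\,m^{-(s+\bar w)}$ with the indicated ratio of $L$-functions. The only cosmetic difference is that the paper verifies this last identification by manipulating the Hecke relation $a(m)a(d)=\sum_{r\mid(m,d)}r\,a(md/r^2)$ globally, whereas you propose checking it prime-by-prime via Satake parameters; both arguments are standard and equivalent, and your constant $2^{1-2s}\pi^{1-s}m^{1/2-s}\Gamma(s+\bar w-1)\Gamma(s-\bar w)/\Gamma(s)$ for the $y$-integral matches the paper's after Legendre duplication.
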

\begin{proof}We can evaluate $\langle D(*,s,\chi),E_{i\infty}(*,w,\chi)\rangle$ by unfolding $D(z,s,\chi)$. We use $(...)^*$ to denote the complex conjugate of $(...)$.
  \begin{align*}
    &\langle D(z,s,\chi),E_{i\infty}(z,w,\chi)\rangle\\
    &= \int_{\Gamma_0(N)\backslash\mathfrak{h}}\sum_{\gamma\in\Gamma_\infty\backslash\Gamma_0(N)}\chi(\gamma)A(\gamma z)\text{Im}(\gamma z)\overline{E(\gamma z,w,\chi)}\frac{dz}{y^2}\\
    &=\int_0^\infty\int_0^1\sum_{n=1}^\infty \frac{a_n}{n}e^{2\pi inx}e^{-2\pi ny}y^{s}\left(2y^wL(2w,\chi) + \frac{4\pi^w\tau(\chi)}{N^{2w}\Gamma(w)}\sqrt{y}\sum_{n\neq0}|n|^{\frac{1}{2} - w}\sigma_{2w - 1}(n,\bar\chi)K_{w - \frac{1}{2}}(2\pi|n|y)e^{2\pi inx}\right)^*\frac{dxdy}{y^2}\\
  &=\int_0^\infty\sum_{n=1}^\infty \frac{a_n}{n}e^{-2\pi ny}y^{s-2}\frac{4\pi^{\bar{w}}\overline{\tau(\chi)}}{N^{2\bar{w}}\Gamma(\bar{w})}\sqrt{y}n^{\frac{1}{2} - \bar{w}}\sum_{d|n}\chi(d)d^{2\bar{w}-1}\frac{1}{2}\int_0^\infty \exp\left(-\pi ny(u+u^{-1})\right)u^{\bar{w}-\frac{3}{2}}du\,dy\\
  &=\frac{2\pi^{\bar{w}}\overline{\tau(\chi)}}{N^{2\bar{w}}\Gamma(\bar{w})}\sum_{n=1}^\infty a_n n^{-\frac{1}{2} - \bar{w}}\sum_{d|n}\chi(d)d^{2\bar{w}-1} \int_0^\infty\int_0^\infty \exp\left(-\pi ny(u+u^{-1}+2)\right)y^{s-\frac{3}{2}}u^{\bar{w}-\frac{3}{2}}du\,dy\\
&= \frac{2\pi^{\bar{w}}\overline{\tau(\chi)}}{N^{2\bar{w}}\Gamma(\bar{w})}\sum_{n=1}^\infty a_n n^{-\frac{1}{2} - \bar{w}}\sum_{d|n}\chi(d)d^{2\bar{w}-1} \int_0^\infty \frac{\Gamma\left(s-\frac{1}{2}\right)}{(n\pi(u+u^{-1}+2))^{s-\frac{1}{2}}}u^{\bar{w}-\frac{3}{2}}du\\
    &=\frac{2\pi^{\bar{w}-s+\frac{1}{2}}\overline{\tau(\chi)}\Gamma\left(s-\frac{1}{2}\right)}{N^{2\bar{w}}\Gamma(\bar{w})}\sum_{n=1}^\infty a_n n^{-s - \bar{w}}\sum_{d|n}\chi(d)d^{2\bar{w}-1} \int_0^\infty \frac{u^{\bar{w}-\frac{3}{2}}}{(u+u^{-1}+2)^{s-\frac{1}{2}}}du .
  \end{align*}
  The integral in $u$ can again be evaluated with Mathematica, which uses the Mellin transform of $K_v(y)$ mentioned in the proof of \ref{maassinnerprod}. The expression then becomes
  \begin{align*}
&= 2\pi^{\bar{w}-s+\frac{1}{2}}\overline{\tau(\chi)}N^{-2\bar{w}}\frac{\Gamma\left(s-\frac{1}{2}\right)\Gamma(s-\bar{w})\Gamma(s+\bar{w}-1)}{\Gamma(\bar{w})\Gamma(2s-1)}\sum_{n=1}^\infty a_n n^{-s - \bar{w}}\sum_{d|n}\chi(d)d^{2\bar{w}-1}\\
&= \sum_{d=1}^\infty\sum_{m=1}^\infty a(md)\chi(d)d^{2\bar{w}-1}(md)^{-s-\bar{w}} = \sum_{d=1}^\infty\sum_{m=1}^\infty a(md)\chi(d)d^{\bar{w}-s-1}m^{-s-\bar{w}}.
  \end{align*}
  \begin{align*}
\shortintertext{Consider the expression}
&\sum_{d=1}^\infty\sum_{m=1}^\infty \chi(d)d^{z_1}m^{z_2}a(m)a(d) .\\
\shortintertext{Using the Hecke relations this becomes}
&\sum_{d=1}^\infty\sum_{m=1}^\infty \chi(d)d^{z_1}m^{z_2}\sum_{r|(m,d)}ra\left(\frac{md}{r^2}\right) .\\
\shortintertext{Bringing the $r$ sum out turns this into}
&\sum_{r=1}^\infty\sum_{d=1}^\infty\sum_{m=1}^\infty \chi(r)r^{1+z_1+z_2}\chi(d)d^{z_1}m^{z_2}a(md) ,\\
\shortintertext{and so}
&\sum_{d=1}^\infty\sum_{m=1}^\infty \chi(d)d^{z_1}m^{z_2}a(md) = \frac{\sum_{d=1}^\infty\sum_{m=1}^\infty \chi(d)d^{z_1}m^{z_2}a(m)a(d)}{\sum_{r=1}^\infty\chi(r)r^{1+z_1+z_2}} .\\
\shortintertext{These are $L$-functions. Substituting $z_1 = \bar{w}-s-1, z_2 = -\bar{w}-s$ we get}
&\sum_{d=1}^\infty\sum_{m=1}^\infty a(md)\chi(d)d^{\bar{w}-s-1}m^{-s-\bar{w}} = \frac{L_f(s-\bar{w}+1,\chi)L_f(\bar{w}+s)}{L(2s,\chi)} .
  \end{align*}
Substituting this into our expression from earlier yields
$$\langle D(*,s,\chi),E_{i\infty}(*,w,\chi)\rangle = 2\overline{\tau(\chi)}\pi^{\bar{w}-s+\frac{1}{2}}N^{-2\bar{w}}\frac{\Gamma\left(s-\frac{1}{2}\right)\Gamma(s-\bar{w})\Gamma(s+\bar{w}-1)}{\Gamma(2s-1)\Gamma(\bar{w})}\frac{L_f(s-\bar{w}+1,\chi)L_f(\bar{w}+s)}{L(2s,\chi)} .$$
\end{proof}
\begin{lemma}\label{eisensteininnerprod0}The inner product of $D(z,s,\chi)$ with the Eisenstein series $E_0(z,s,\chi)$ is given by
$$\langle D(*,s,\chi),E_0(*,w,\chi)\rangle = 2\pi^{\bar{w}-s+\frac{1}{2}}N^{-\bar{w}}\frac{\Gamma\left(s-\frac{1}{2}\right)\Gamma(s-\bar{w})\Gamma(s+\bar{w}-1)}{\Gamma(2s-1)\Gamma(\bar{w})}\frac{L_f(s+\bar{w},\bar\chi)L_f(1-\bar{w}+s)}{L(2s,\bar\chi)} .$$
\end{lemma}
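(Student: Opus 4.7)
The plan is to follow the proof of Lemma \ref{eisensteininnerprodinfty} verbatim, substituting the Fourier expansion of $E_0(z,w,\chi)$ from Lemma \ref{eisensteinexpansion} at the appropriate step. First I would unfold the defining sum of $D$ against $\overline{E_0}$. Since both $D(z,s,\chi)$ and $E_0(z,w,\chi)$ transform under $\Gamma_0(N)$ by the character $\bar\chi$, the product $D(z)\overline{E_0(z)}$ is $\Gamma_0(N)$-invariant, and the standard unfolding argument yields
\[
\langle D(*,s,\chi),\,E_0(*,w,\chi)\rangle \;=\; \int_0^\infty\!\!\int_0^1 A(z)\,y^{s-2}\,\overline{E_0(z,w,\chi)}\,dx\,dy.
\]

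Next I would substitute the Fourier expansion of $E_0$ from Lemma \ref{eisensteinexpansion}. The constant term contributes nothing since $A(z)y^s = \sum_{m\geq 1}(a_m/m)\,e^{2\pi i m x}e^{-2\pi m y}y^s$ has no zeroth Fourier mode in $x$, and orthogonality in the $x$-integral forces $m = n \geq 1$ against the $n$th Fourier coefficient of $\overline{E_0}$. The subsequent $y$-integral, expanded via the integral representation of $K_{\bar w - \frac{1}{2}}$, produces precisely the same $y$- and $u$-integrals as in the proof of Lemma \ref{eisensteininnerprodinfty}, evaluating to $\Gamma(s-\frac{1}{2})/(\pi n(u+u^{-1}+2))^{s-\frac{1}{2}}$ and $\Gamma(s-\bar w)\Gamma(s+\bar w-1)/\Gamma(2s-1)$ respectively. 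The remaining Dirichlet series $\sum_n a_n\,n^{\bar w - s - 1}\,\sigma_{1-2\bar w}(n,\bar\chi)$ is then rewritten by setting $n = dm$ and applying the Hecke-relation identity from the end of the proof of Lemma \ref{eisensteininnerprodinfty}, now with $\chi$ replaced by $\bar\chi$ and exponents shifted appropriately, yielding $L_f(s+\bar w,\bar\chi)\,L_f(1-\bar w+s)/L(2s,\bar\chi)$. Assembling these factors gives the stated expression.

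The only meaningful differences from Lemma \ref{eisensteininnerprodinfty} come from the different Fourier expansion: the $n \neq 0$ coefficients of $E_0$ carry no Gauss sum $\tau(\chi)$ factor, have $N^{-w}$ rather than $N^{-2w}$ in the prefactor, and their divisor sums are twisted by $\chi$ rather than by $\bar\chi$. These three small changes are precisely what produce the $\chi \leftrightarrow \bar\chi$ swap and the apparent $w \mapsto 1-w$ shift visible in the final answer relative to Lemma \ref{eisensteininnerprodinfty}. I do not anticipate any real obstacle beyond careful bookkeeping of complex conjugates and of the powers of $N$ and $\pi$.
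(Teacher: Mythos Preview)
Your proposal is correct and follows essentially the same route as the paper: unfold $D$, drop the constant term of $\overline{E_0}$ by $x$-orthogonality, evaluate the identical $y$- and $u$-integrals, and then observe that the remaining Dirichlet series is the one from Lemma~\ref{eisensteininnerprodinfty} with $\chi\mapsto\bar\chi$ and $\bar w\mapsto 1-\bar w$. The paper's proof is exactly this computation, and your summary of the three bookkeeping differences (no $\tau(\chi)$, $N^{-w}$ versus $N^{-2w}$, and the swapped character in the divisor sum) is accurate.
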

\begin{proof} The computation of $\langle D(*,s,\chi),E_0(*,w,\chi)\rangle$ is very similar to that of $\langle D(*,s,\chi),E_{i\infty}(*,w,\chi)$:
\begin{align*}\langle D(*,s,\chi),E_0(*,w,\chi)\rangle = \int_0^\infty\int_0^1&\sum_{n=1}^\infty \frac{a_n}{n}e^{2\pi inx}e^{-2\pi ny}y^{s}\left(\frac{2\tau(\chi)\pi^{2w-1}\Gamma\left(1-w\right)}{N^{1-3w}\Gamma(w)}y^{1-w}L(2-2w,\bar\chi)\right.\\
  &\left.+ \frac{4\pi^w}{N^w\Gamma(w)}\sqrt{y}\sum_{n\neq 0}|n|^{w-\frac{1}{2}}\sigma_{1-2w}(n,\chi)K_{w-\frac{1}{2}}(2\pi|n|y)e^{2\pi inx}\right)^*\frac{dxdy}{y^2} ,
\end{align*}
again using $(...)^*$ to denote the complex conjugate of $(...)$. Integrating in $x$:
\begin{align*}&= \int_0^\infty\sum_{n=1}^\infty \frac{a_n}{n}e^{-2\pi ny}y^{s-\frac{3}{2}}\frac{4\pi^{\bar{w}}}{N^{\bar{w}}\Gamma(\bar{w})}n^{\bar{w}-\frac{1}{2}}\sum_{d|n}\bar\chi(d)d^{1-2\bar{w}}\frac{1}{2}\int_0^\infty\exp\left(-\pi ny(u+u^{-1})\right)u^{\bar{w}-\frac{3}{2}}du\,dy\\
  &= \frac{2\pi^{\bar{w}}}{N^{\bar{w}}\Gamma(\bar{w})}\sum_{n=1}^\infty a_nn^{\bar{w}-\frac{3}{2}}\sum_{d|n}\bar\chi(d)d^{1-2\bar{w}}\int_0^\infty\int_0^\infty\exp\left(-\pi ny(u+u^{-1}+2)\right)y^{s-\frac{3}{2}}u^{\bar{w}-\frac{3}{2}}du\,dy .
\end{align*}
These integrals are identical to the ones that appear in the computation of $\langle D,E_{i\infty}\rangle$. The expression becomes
$$= 2\pi^{\bar{w}-s+\frac{1}{2}}N^{-\bar{w}}\frac{\Gamma\left(s-\frac{1}{2}\right)\Gamma(s-\bar{w})\Gamma(s+\bar{w}-1)}{\Gamma(\bar{w})\Gamma(2s-1)}\sum_{n=1}^\infty a_nn^{\bar{w}-s-1}\sum_{d|n}\bar\chi(d)d^{1-2\bar{w}} .$$
The sums are the same as the ones that appeared in the previous computation, but with $\bar{w}$ replaced with $1 - \bar{w}$ and $\chi$ replaced with $\bar\chi$. Hence
$$\langle D(*,s,\chi),E_0(*,w,\chi)\rangle = 2\pi^{\bar{w}-s+\frac{1}{2}}N^{-\bar{w}}\frac{\Gamma\left(s-\frac{1}{2}\right)\Gamma(s-\bar{w})\Gamma(s+\bar{w}-1)}{\Gamma(\bar{w})\Gamma(2s-1)}\frac{L_f(s+\bar{w},\bar\chi)L_f(1-\bar{w}+s)}{L(2s,\bar\chi)} .$$
\end{proof}
\noindent We can now prove theorem \ref{constterm}.
\begin{proof}[Proof of theorem \ref{constterm}] We compute directly:
\begin{align*}
  &\int_0^1E^*(x+iy,s,\chi)dx\\
  =&\int_0^1 D(x+iy,s,\chi) - A(x+iy)E(x+iy,s,\chi) \,dx\\
  =&\int_0^1 \left(\sum_j \frac{\langle D(*,s,\chi), M_j\rangle}{\langle M_j, M_j\rangle} M_j(x+iy) + \frac{1}{4\pi i}\sum_{\fa}\int_{\left(\frac{1}{2}\right)}\langle D(*,s,\chi), E_\fa(*,w,\chi)\rangle E_\fa(x+iy,w,\chi) dw\right)\,dx\\
  &\quad\quad\quad\quad - \int_0^1 A(x+iy)E(x+iy,s,\chi) \,dx .\\
\end{align*}
The Maass forms have no constant term, so they won't contribute to the expression at hand. Using lemmas \ref{eisensteinexpansion}, \ref{eisensteininnerprodinfty}, and \ref{eisensteininnerprod0} then gives
\begin{align*}
   &= \frac{1}{4\pi i}\int_{\left(\frac{1}{2}\right)}2\overline{\tau(\chi)}\pi^{\bar{w}-s+\frac{1}{2}}N^{-2\bar{w}}\frac{\Gamma\left(s-\frac{1}{2}\right)\Gamma(s-\bar{w})\Gamma(s+\bar{w}-1)}{\Gamma(2s-1)\Gamma(\bar{w})}\frac{L_f(s-\bar{w}+1,\chi)L_f(\bar{w}+s)}{L(2s,\chi)}\cdot 2y^w \,dw\\
  &+\frac{1}{4\pi i}\int_{\left(\frac{1}{2}\right)}2\pi^{\bar{w}-s+\frac{1}{2}}N^{-\bar{w}}\frac{\Gamma\left(s-\frac{1}{2}\right)\Gamma(s-\bar{w})\Gamma(s+\bar{w}-1)}{\Gamma(2s-1)\Gamma(\bar{w})}\frac{L_f(s+\bar{w},\bar\chi)L_f(1-\bar{w}+s)}{L(2s,\bar\chi)}\\
  &\quad\quad\quad\quad\quad\quad\quad\quad\quad\quad\quad\quad\quad\quad\quad\quad\quad\quad\quad\quad\quad\quad\quad\quad\quad\quad\cdot \frac{2\tau(\chi)\pi^{2w-1}\Gamma\left(1-w\right)}{N^{1-3w}\Gamma(w)}y^{1-w} \,dw .\\
  &-\sum_{m = 1}^{\infty}\frac{a_m}{m}e^{-2\pi m y}\frac{4\pi^s\tau(\chi)}{N^{2s}\Gamma(s)L(2s,\chi)}y^{\frac{1}{2}}m^{\frac{1}{2} - s}\sigma_{2s - 1}(m,\bar\chi)K_{s - \frac{1}{2}}(2\pi my)
\end{align*}
~\\
Note that $\bar w = 1 - w$ on the line $\text{Re}(w) = \frac{1}{2}$. Making this substitution and cleaning up a bit yields\\
\begin{align*}
  &\frac{1}{4\pi i}\int_{\left(\frac{1}{2}\right)}2\overline{\tau(\chi)}\pi^{1-w-s+\frac{1}{2}}N^{2w-2}\frac{\Gamma\left(s-\frac{1}{2}\right)\Gamma(s+w-1)\Gamma(s-w)}{\Gamma(2s-1)\Gamma(1-w)}\frac{L_f(s+w,\chi)L_f(s+1-w)}{L(2s,\chi)}\cdot 2y^w \,dw\\
  +&\frac{1}{4\pi i}\int_{\left(\frac{1}{2}\right)}2\tau(\chi)\pi^{w-s+\frac{1}{2}}N^{-2w}\frac{\Gamma\left(s-\frac{1}{2}\right)\Gamma(s-w)\Gamma(s+w-1)}{\Gamma(2s-1)\Gamma(w)}\frac{L_f(s+1-w,\bar\chi)L_f(s+w)}{L(2s,\bar\chi)}\cdot 2y^{1-w} \,dw\\
  -&\sum_{m = 1}^{\infty}\frac{a_m}{m}e^{-2\pi m y}\frac{4\pi^s\tau(\chi)}{N^{2s}\Gamma(s)L(2s,\chi)}y^{\frac{1}{2}}m^{\frac{1}{2} - s}\sigma_{2s - 1}(m,\bar\chi)K_{s - \frac{1}{2}}(2\pi my) .
\end{align*}
From \cite{osul00} we know that the constant term of $E^*(z,s,\chi)$ is of the form $\phi^*_\chi(s)y^{1-s}$. Therefore
$$\phi^*_\chi(s) = \lim_{y\to\infty}y^{s-1}\int_0^1 E^*(x+iy,s,\chi)\,dx .$$
As $y$ goes to infinity, the contribution from $A(z)E(z,s,\chi)$, the term in the last line, vanishes, because the $K$-Bessel functions have exponential decay in $y$.\\
\\
The contribution from $E_{i\infty}(z,s,\chi)$ on the first line can be evaluated by shifting the contour to the left. There are poles whenever $w = 1-s - n$ for $n$ a non-negative integer. After taking the above limit, the only term that will survive is when $w = 1-s$, which has residue
$$2\pi i \cdot -i\tau(\bar\chi)L_f(1,\chi) \pi^{-\frac{1}{2}}N^{-2s}\frac{\Gamma\left(s-\frac{1}{2}\right)}{\Gamma(s)}\frac{L_f(2s)}{L(2s,\chi)}y^{1-s} .$$
Similarly, the contribution from $E_0(z,s,\chi)$ can be evaluated by shifting the integral in the second line to the right. The only residue which will contribute after the limit in $y$ is when $w = s$, and is equal to
$$-2\pi i \cdot -i\tau(\chi)L_f(1,\bar\chi) \pi^{-\frac{1}{2}}N^{-2s}\frac{\Gamma\left(s-\frac{1}{2}\right)}{\Gamma(s)}\frac{L_f(2s)}{L(2s,\bar\chi)}y^{1-s} .$$
From this we can conclude that for any $y$, we have
\begin{align*}
  \int_0^1 E^*(x + iy,s,\chi)\, dx = &\left(2\tau(\bar\chi)L_f(1,\chi) \pi^{\frac{1}{2}}N^{-2s}\frac{\Gamma\left(s-\frac{1}{2}\right)}{\Gamma(s)}\frac{L_f(2s)}{L(2s,\chi)}\right.\\
  &\left.- 2\tau(\chi)L_f(1,\bar\chi) \pi^{\frac{1}{2}}N^{-2s}\frac{\Gamma\left(s-\frac{1}{2}\right)}{\Gamma(s)}\frac{L_f(2s)}{L(2s,\bar\chi)}\right)y^{1-s} .
\end{align*}
\end{proof}
\noindent
We can also prove theorem \ref{higherterms} in a similar way.
\begin{proof}[Proof of theorem \ref{higherterms}] We compute directly:
\begin{align*}
  &\int_0^1E^*(x+iy,s,\chi)e^{-2\pi i n x}dx\\
  =&\int_0^1 (D(x+iy,s,\chi) - A(x+iy)E(x+iy,s,\chi))e^{-2\pi i n x} dx\\
  =&\int_0^1 \left(\sum_j \frac{\langle D(*,s,\chi), M_j\rangle}{\langle M_j, M_j\rangle} M_j(x+iy) + \frac{1}{4\pi i}\sum_{\fa}\int_{\left(\frac{1}{2}\right)}\langle D(*,s,\chi), E_\fa(*,w,\chi)\rangle E_\fa(x+iy,w,\chi) dw\right)e^{-2\pi i n x}dx\\
  &\quad\quad\quad\quad - \int_0^1 A(x+iy)E(x+iy,s,\chi)e^{-2\pi i n x} dx\\
  =&\sum_j \frac{(4\pi)^{1-s}}{2\langle M_j, M_j\rangle}\frac{\Gamma\left(s+i\lambda_j-\frac{1}{2}\right)\Gamma\left(s-i\lambda_j-\frac{1}{2}\right)}{\Gamma(s)}L\left(s+\frac{1}{2},f\times\overline{M_j}\right) \cdot c_{M_j}(n)y^{\frac{1}{2}}K_{i\lambda_j}(2\pi|n|y)\\
  +&\frac{1}{4\pi i}\int_{\left(\frac{1}{2}\right)}4\overline{\tau(\chi)}\pi^{\bar{w}-s+\frac{1}{2}}N^{-2\bar{w}}\frac{\Gamma\left(s-\frac{1}{2}\right)\Gamma(s-\bar{w})\Gamma(s+\bar{w}-1)}{\Gamma(2s-1)\Gamma(\bar{w})}\frac{L_f(s-\bar{w}+1,\chi)L_f(\bar{w}+s)}{L(2s,\chi)}\\
  &\quad\quad\quad\quad\quad\quad\quad\quad\quad\quad\quad\quad\quad\quad\quad\quad\quad\quad\quad\quad\quad\quad\cdot \frac{4\pi^w\tau(\chi)}{N^{2w}\Gamma(w)L(2w,\chi)}y^{\frac{1}{2}}|n|^{\frac{1}{2} - w}\sigma_{2w - 1}(n,\bar\chi)K_{w - \frac{1}{2}}(2\pi|n|y) \,dw\\
  +&\frac{1}{4\pi i}\int_{\left(\frac{1}{2}\right)}4\pi^{\bar{w}-s+\frac{1}{2}}N^{-\bar{w}}\frac{\Gamma\left(s-\frac{1}{2}\right)\Gamma(s-\bar{w})\Gamma(s+\bar{w}-1)}{\Gamma(2s-1)\Gamma(\bar{w})}\frac{L_f(s+\bar{w},\bar\chi)L_f(1-\bar{w}+s)}{L(2s,\bar\chi)}\\
  &\quad\quad\quad\quad\quad\quad\quad\quad\quad\quad\quad\quad\quad\quad\quad\quad\quad\quad\quad\quad\quad\quad\cdot \frac{4\pi^w}{N^w\Gamma(w)L(2-2w,\bar\chi)}y^{\frac{1}{2}}|n|^{w-\frac{1}{2}}\sigma_{1-2w}(n,\chi)K_{w-\frac{1}{2}}(2\pi|n|y) \,dw\\
  -&\sum_{m = 1}^{\infty}\frac{a_m}{m}e^{-2\pi m y}\frac{4\pi^s\tau(\chi)}{N^{2s}\Gamma(s)L(2s,\chi)}y^{\frac{1}{2}}|n-m|^{\frac{1}{2} - s}\sigma_{2s - 1}(n-m,\bar\chi)K_{s - \frac{1}{2}}(2\pi|n-m|y) .
\end{align*}
~\\
Again note that $\bar w = 1 - w$ on the line $\text{Re}(w) = \frac{1}{2}$. Making this substitution and cleaning up a bit yields
\begin{align*}
  &\sum_j \frac{(4\pi)^{1-s}}{2\langle M_j, M_j\rangle}\frac{\Gamma\left(s+i\lambda_j-\frac{1}{2}\right)\Gamma\left(s-i\lambda_j-\frac{1}{2}\right)}{\Gamma(s)}L\left(s+\frac{1}{2},f\times\overline{M_j}\right) \cdot c_{M_j}(n)y^{\frac{1}{2}}K_{i\lambda_j}(2\pi|n|y)\\
  +&\frac{1}{4\pi i}\int_{\left(\frac{1}{2}\right)}16\pi^{\frac{3}{2}-s}N^{-1}\frac{\Gamma\left(s-\frac{1}{2}\right)\Gamma(s+w-1)\Gamma(s-w)}{\Gamma(2s-1)\Gamma(1-w)\Gamma(w)}\frac{L_f(s+w,\chi)L_f(s+1-w)}{L(2s,\chi)L(2w,\chi)}\\
  &\quad\quad\quad\quad\quad\quad\quad\quad\quad\quad\quad\quad\quad\quad\quad\quad\quad\quad\quad\quad\quad\quad\quad\quad\quad\quad\quad\quad\cdot y^{\frac{1}{2}}|n|^{\frac{1}{2} - w}\sigma_{2w - 1}(n,\bar\chi)K_{w - \frac{1}{2}}(2\pi|n|y) \,dw\\
  +&\frac{1}{4\pi i}\int_{\left(\frac{1}{2}\right)}16\pi^{\frac{3}{2}-s}N^{-1}\frac{\Gamma\left(s-\frac{1}{2}\right)\Gamma(s-w)\Gamma(s+w-1)}{\Gamma(2s-1)\Gamma(w)\Gamma(1-w)}\frac{L_f(s+1-w,\bar\chi)L_f(s+w)}{L(2s,\bar\chi)L(2-2w,\bar\chi)}\\
  &\quad\quad\quad\quad\quad\quad\quad\quad\quad\quad\quad\quad\quad\quad\quad\quad\quad\quad\quad\quad\quad\quad\quad\quad\quad\quad\quad\quad\cdot y^{\frac{1}{2}}|n|^{w-\frac{1}{2}}\sigma_{1-2w}(n,\chi)K_{w-\frac{1}{2}}(2\pi|n|y) \,dw\\
  -&\sum_{m = 1}^{\infty}\frac{a_m}{m}e^{-2\pi m y}\frac{4\pi^s\tau(\chi)}{N^{2s}\Gamma(s)L(2s,\chi)}y^{\frac{1}{2}}|n-m|^{\frac{1}{2} - s}\sigma_{2s - 1}(n-m,\bar\chi)K_{s - \frac{1}{2}}(2\pi|n-m|y) .
\end{align*}
Making the substitution $w \mapsto 1-w$ in the third line and using the identity
$$\Gamma(2s-1) = 4^{w-1}\pi^{-\frac{1}{2}}\Gamma\left(s-\frac{1}{2}\right)\Gamma(s)$$
then gives\\
\begin{align*}
  &\int_0^1E^*(x+iy,s,\chi)e^{-2\pi i n x}dx\\
  =&\sum_j \frac{(4\pi)^{1-s}}{2\langle M_j, M_j\rangle}\frac{\Gamma\left(s+i\lambda_j-\frac{1}{2}\right)\Gamma\left(s-i\lambda_j-\frac{1}{2}\right)}{\Gamma(s)}L\left(s+\frac{1}{2},f\times\overline{M_j}\right) \cdot c_{M_j}(n)y^{\frac{1}{2}}K_{i\lambda_j}(2\pi|n|y)\\
  -&\sum_{m = 1}^{\infty}\frac{a_m}{m}e^{-2\pi m y}\frac{4\pi^s\tau(\chi)}{N^{2s}\Gamma(s)L(2s,\chi)}|n-m|^{\frac{1}{2} - s}\sigma_{2s - 1}(n-m,\bar\chi)y^{\frac{1}{2}}K_{s - \frac{1}{2}}(2\pi|n-m|y)\\
  +&\frac{2^{2s+1}\pi^{2-s}}{2\pi i \cdot N\Gamma(s)}\int_{\left(\frac{1}{2}\right)}\frac{\Gamma(s+w-1)\Gamma(s-w)}{\Gamma(1-w)\Gamma(w)}L_f(s+1-w)\left(\frac{L_f(s+w,\chi)}{L(2s,\chi)L(2w,\chi)}\sigma_{2w - 1}(n,\bar\chi) - \frac{L_f(s+w,\bar\chi)}{L(2s,\bar\chi)L(2w,\bar\chi)}\sigma_{2w - 1}(n,\chi)\right)\\
  &\quad\quad\quad\quad\quad\quad\quad\quad\quad\quad\quad\quad\quad\quad\quad\quad\quad\quad\quad\quad\quad\quad\quad\quad\quad\quad\quad\quad\quad\quad\quad\quad\quad\cdot |n|^{\frac{1}{2} - w}y^{\frac{1}{2}}K_{w - \frac{1}{2}}(2\pi|n|y) \,dw .
\end{align*}
Shifting the contour to the left allows us to write the integral as a sum of residues. The expression becomes 
  \begin{align*}
  &\int_0^1E^*(x+iy,s,\chi)e^{-2\pi i n x}dx\\
  =&\sum_j \frac{(4\pi)^{1-s}}{2\langle M_j, M_j\rangle}\frac{\Gamma\left(s+i\lambda_j-\frac{1}{2}\right)\Gamma\left(s-i\lambda_j-\frac{1}{2}\right)}{\Gamma(s)}L\left(s+\frac{1}{2},f\times\overline{M_j}\right) \cdot c_{M_j}(n)y^{\frac{1}{2}}K_{i\lambda_j}(2\pi|n|y)\\
  -&\sum_{m = 1}^{\infty}\frac{a_m}{m}e^{-2\pi m y}\frac{4\pi^s\tau(\chi)}{N^{2s}\Gamma(s)}|n-m|^{\frac{1}{2} - s}\sigma_{2s - 1}(n-m,\bar\chi)y^{\frac{1}{2}}K_{s - \frac{1}{2}}(2\pi|n-m|y)\\
  +&\frac{2^{2s+1}\pi^{2-s}}{N\Gamma(s)}\sum_{k=0}^\infty\frac{\Gamma(2s+k-1)}{k!\Gamma(s+k)\Gamma(1-s-k)}\\
  &\quad\quad\quad\quad\quad\quad\quad\cdot L_f(k)\left(\frac{L_f(1-k,\chi)}{L(2s,\chi)L(2-2s-2k,\chi)}\sigma_{1 - 2s - 2k}(n,\bar\chi) - \frac{L_f(1-k,\bar\chi)}{L(2s,\bar\chi)L(2-2s-2k,\bar\chi)}\sigma_{1 - 2s - 2k}(n,\chi)\right)\\
  &\quad\quad\quad\quad\quad\quad\quad\quad\quad\quad\quad\quad\quad\quad\cdot |n|^{s+k-\frac{1}{2}}y^{\frac{1}{2}}K_{\frac{1}{2}-s-k}(2\pi|n|y)\\
  +&\frac{2^{2s+1}\pi^{2-s}}{N\Gamma(s)}\!\!\sum_{\rho: L(2\rho,\chi) = 0}\! \frac{\Gamma(s+\rho-1)\Gamma(s-\rho)}{\Gamma(1-\rho)\Gamma(\rho)}\frac{L_f(s+1-\rho)L_f(s+\rho,\chi)}{L(2s,\chi)}\underset{\! w = \rho}{\text{{Res }}}\frac{1}{L(2w,\chi)}\,\sigma_{2\rho - 1}(n,\bar\chi)|n|^{\frac{1}{2} - \rho}y^{\frac{1}{2}}K_{\rho - \frac{1}{2}}(2\pi|n|y)\\
  -&\frac{2^{2s+1}\pi^{2-s}}{N\Gamma(s)}\!\!\sum_{\rho: L(2\rho,\bar\chi) = 0}\! \frac{\Gamma(s+\rho-1)\Gamma(s-\rho)}{\Gamma(1-\rho)\Gamma(\rho)}\frac{L_f(s+1-\rho)L_f(s+\rho,\bar\chi)}{L(2s,\bar\chi)}\underset{\! w = \rho}{\text{{Res }}}\frac{1}{L(2w,\bar\chi)}\,\sigma_{2\rho - 1}(n,\chi)|n|^{\frac{1}{2} - \rho}y^{\frac{1}{2}}K_{\rho - \frac{1}{2}}(2\pi|n|y).\\
\end{align*}
In the proof of theorem \ref{constterm} we used an argument to be able to discard all but one of the terms appearing, but in the current situation no argument of that sort is immediately available to us.
\end{proof}

\section{Sums of modular symbols twisted by Dirichlet characters}\label{applicationssection}
\subsection{Geometric ordering}
It was conjectured in \cite{go99} and proved in \cite{goosul03} that
$$\sum_{\gamma: |\!|\gamma|\!|_i \leq X} \langle \gamma,f\rangle \sim \underset{\! w=1}{\text{Res }}E^*(z,w)\cdot X$$
where $|\!|\gamma|\!|_z := |cz+d|^2$. Using the Fourier expansion of $E^*(z,s,\chi)$ we can obtain similar results when the sum on the left is twisted by $\chi$:
$$\sum_{\gamma: |\!|\gamma|\!|_z \leq \text{Im}(z)X} \chi(\gamma)\langle \gamma,f\rangle = \sum_{s:\text{ poles of }E^*(z,s,\chi)} \underset{\! w=s}{\text{Res }}E^*(z,w,\chi)\cdot X^s .$$
This sum can be evaluated using theorems \ref{constterm} and \ref{higherterms}. Via inspection of these theorems, we see that the main term will be $\cO(X^{\frac{1}{2} + \frac{1}{2}\text{Re}(\rho)})$ where $\rho$ is the rightmost zero of $L(s,\chi)$, and the most significant error terms will be $\cO(X^{\frac{1}{2}})$ and come from the Maass part of the spectral decomposition of $E^*(z,s,\chi)$. The Riemann hypothesis for Dirichlet $L$-functions implies that the main term will be of size $\cO(X^{\frac{3}{4}})$. 
\begin{proof}[Proof of theorem \ref{geometricordering}]
  For $\sigma > 2$ we evaluate $\int_{(\sigma)}E^*(z,s,\chi)X^s\frac{ds}{s}$ in two different ways. First, using the series expansion
  \begin{align*}
E^*(z,s,\chi) \,=& \sum_{\gamma \in \Gamma_\infty\backslash\Gamma_0(N)}\chi(\gamma)\langle\gamma,f\rangle\frac{\text{Im}(z)^s}{|cz+d|^{2s}}\\
\shortintertext{we have}
  \int_{(\sigma)}E^*(z,s,\chi)X^s\frac{ds}{s} \,=&\sum_{\gamma \in \Gamma_\infty\backslash\Gamma_0(N)}\chi(\gamma)\langle\gamma,f\rangle\int_{(\sigma)}\frac{\text{Im}(z)^s}{|cz+d|^{2s}}X^s\frac{ds}{s}\\
  \,=&\sum_{\gamma:|\!|\gamma|\!|_z \leq \text{Im}(z)X}\chi(\gamma)\langle\gamma,f\rangle .
\end{align*}
On the other hand, we can also evaluate this integral as a sum of residues at the poles of $E^*(z,s,\chi)$. Because $K_v(2\pi|n|y)$ has exponential decay in $2\pi|n|y$, the only poles of $E^*(z,s,\chi)$ will be the poles of its individual Fourier coefficients. Computing the residues at the most significant poles, we obtain
  \begin{align*}
  &\,\,\sum_{\substack{\gamma \in \Gamma_\infty\backslash\Gamma_0(N)\\|\!|\gamma|\!|_z \leq \text{{Im}}(z)X}}\chi(\gamma)\langle\gamma,f\rangle\\ 
  &=\sum_{\rho: L(\rho,\bar\chi) = 0} \frac{(4\pi)^{1 - \frac{\rho}{2}}}{N}\frac{\Gamma(1 - \rho)}{\Gamma\!\left(1 - \frac{\rho}{2}\right)^2\Gamma\!\left(\frac{\rho}{2}\right)}\frac{L_f(0)L_f(1,\bar\chi)}{L(1 - \rho,\bar\chi)}\underset{\! s = \rho}{\text{{Res }}}\frac{1}{L(s,\bar\chi)}\sum_{n\neq 0}e^{2\pi i n x}\sigma_{\rho}(n,\chi)|n|^{\frac{1-\rho}{2}}y^{\frac{1}{2}}K_{\frac{1}{2}+\rho}(2\pi|n|y) \cdot X^{1 - \frac{\rho}{2}}\\
  &- \sum_{\rho: L(\rho,\chi) = 0} \frac{(4\pi)^{1 - \frac{\rho}{2}}}{N}\frac{\Gamma(1 - \rho)}{\Gamma\!\left(1 - \frac{\rho}{2}\right)^2\Gamma\!\left(\frac{\rho}{2}\right)}\frac{L_f(0)L_f(1,\chi)}{L(1 - \rho,\chi)}\underset{\! s = \rho}{\text{{Res }}}\frac{1}{L(s,\chi)}\sum_{n\neq 0}e^{2\pi i n x}\sigma_{\rho}(n,\bar\chi)|n|^{\frac{1-\rho}{2}}y^{\frac{1}{2}}K_{\frac{1}{2}+\rho}(2\pi|n|y) \cdot X^{1 - \frac{\rho}{2}}\\
  &+ \,\,\,\,\cO(X^{\frac{1}{2}}).
\end{align*}
Note that there are no poles coming from the trivial zeroes of $L(s,\chi)$ because of the gamma factors.\end{proof}
\subsection{Arithmetic ordering}
The Eisenstein series $E^*(z,s,\chi)$ admits a double coset decomposition involving the function $\phi^*_\chi(n,s)$ defined in the introduction. In the classical case where the modular symbols are absent, the Kloosterman sums analogous to $\phi^*_\chi(n,s)$ can expressed in a closed form, so it is natural to ask if the same can be done for $\phi^*_\chi(n,s)$. This is done in corollary \ref{kloosterman}, which we now prove.\\
\begin{proof}[Proof of corollary \ref{kloosterman}] From the work of O'Sullivan \cite{osul00}, we have
  $$E^*(z,s,\chi) = \phi^*_\chi(0,s)y^{1-s} + \sum_{n\neq 0}\phi^*_\chi(n,s)\cdot 2|n|^{\frac{1}{2}}y^{\frac{1}{2}}K_{s-\frac{1}{2}}(2\pi|n|y)e^{2\pi i n x}.$$
  Hence, for $n \neq 0$, we have
  $$\int_0^1 E^*(z,s,\chi)e^{-2\pi i n x}\,dx = \phi^*_\chi(n,s)\cdot 2|n|^{\frac{1}{2}}y^{\frac{1}{2}}K_{s-\frac{1}{2}}(2\pi|n|y).$$
  On the other hand, from theorem \ref{higherterms} we have
\begin{align*}
  &\int_0^1E^*(x+iy,s,\chi)e^{-2\pi i n x}dx\\
  =&\sum_j \frac{(4\pi)^{1-s}}{2\langle M_j, M_j\rangle}\frac{\Gamma\left(s+i\lambda_j-\frac{1}{2}\right)\Gamma\left(s-i\lambda_j-\frac{1}{2}\right)}{\Gamma(s)}L\left(s+\frac{1}{2},f\times\overline{M_j}\right) \cdot c_{M_j}(n)y^{\frac{1}{2}}K_{i\lambda_j}(2\pi|n|y)\\
  -&\sum_{m = 1}^{\infty}\frac{a_m}{m}e^{-2\pi m y}\frac{4\pi^s\tau(\chi)}{N^{2s}\Gamma(s)}|n-m|^{\frac{1}{2} - s}\sigma_{2s - 1}(n-m,\bar\chi)y^{\frac{1}{2}}K_{s - \frac{1}{2}}(2\pi|n-m|y)\\
  +&\frac{2^{2s+1}\pi^{2-s}}{N\Gamma(s)}\sum_{k=0}^\infty\frac{\Gamma(2s+k-1)}{k!\Gamma(s+k)\Gamma(1-s-k)}\\
  &\quad\quad\quad\quad\quad\quad\quad\cdot L_f(k)\left(\frac{L_f(1-k,\chi)}{L(2s,\chi)L(2-2s-2k,\chi)}\sigma_{1 - 2s - 2k}(n,\bar\chi) - \frac{L_f(1-k,\bar\chi)}{L(2s,\bar\chi)L(2-2s-2k,\bar\chi)}\sigma_{1 - 2s - 2k}(n,\chi)\right)\\
  &\quad\quad\quad\quad\quad\quad\quad\quad\quad\quad\quad\quad\quad\quad\cdot |n|^{s+k-\frac{1}{2}}y^{\frac{1}{2}}K_{\frac{1}{2}-s-k}(2\pi|n|y)\\
  +&\frac{2^{2s+1}\pi^{2-s}}{N\Gamma(s)}\!\!\sum_{\rho: L(2\rho,\chi) = 0}\! \frac{\Gamma(s+\rho-1)\Gamma(s-\rho)}{\Gamma(1-\rho)\Gamma(\rho)}\frac{L_f(s+1-\rho)L_f(s+\rho,\chi)}{L(2s,\chi)}\underset{\! w = \rho}{\text{{Res }}}\frac{1}{L(2w,\chi)}\,\sigma_{2\rho - 1}(n,\bar\chi)|n|^{\frac{1}{2} - \rho}y^{\frac{1}{2}}K_{\rho - \frac{1}{2}}(2\pi|n|y)\\
  -&\frac{2^{2s+1}\pi^{2-s}}{N\Gamma(s)}\!\!\sum_{\rho: L(2\rho,\bar\chi) = 0}\! \frac{\Gamma(s+\rho-1)\Gamma(s-\rho)}{\Gamma(1-\rho)\Gamma(\rho)}\frac{L_f(s+1-\rho)L_f(s+\rho,\bar\chi)}{L(2s,\bar\chi)}\underset{\! w = \rho}{\text{{Res }}}\frac{1}{L(2w,\bar\chi)}\,\sigma_{2\rho - 1}(n,\chi)|n|^{\frac{1}{2} - \rho}y^{\frac{1}{2}}K_{\rho - \frac{1}{2}}(2\pi|n|y).
\end{align*}
  This immediately implies the desired result for $n \neq 0$. The case where $n = 0$ is proved similarly.
\end{proof}
~\\
We can now prove theorem \ref{arithmeticordering}.
\begin{proof}[Proof of theorem \ref{arithmeticordering}]
  For $\sigma > 2$ we have
  \begin{align*}
    \int_{(\sigma)}\sum_{\substack{\gamma \in \Gamma_\infty\backslash\Gamma_0(N)/\Gamma_\infty}}\frac{\chi(\gamma)\langle\gamma,f\rangle}{|c|^{2s}} X^s\frac{ds}{s}
    = &\sum_{\substack{\gamma \in \Gamma_\infty\backslash\Gamma_0(N)/\Gamma_\infty}}\chi(\gamma)\langle\gamma,f\rangle \int_{(\sigma)}\left(\frac{X}{c^2}\right)^s\frac{ds}{s}\\
    &\sum_{\substack{\gamma \in \Gamma_\infty\backslash\Gamma_0(N)/\Gamma_\infty\\|c|\, <\, X^{\frac{1}{2}}}}\chi(\gamma)\langle\gamma,f\rangle
  \end{align*}
  On the other hand, using corollary \ref{kloosterman}, we have
  \begin{align*}
    \int_{(\sigma)}\sum_{\substack{\gamma \in \Gamma_\infty\backslash\Gamma_0(N)/\Gamma_\infty}}\frac{\chi(\gamma)\langle\gamma,f\rangle}{|c|^{2s}} X^s\frac{ds}{s}
    = \int_{(\sigma)} 2N^{-2s}L_f(2s)\left(\frac{\tau(\bar\chi)L_f(1,\chi)}{L(2s,\chi)}-\frac{\tau(\chi)L_f(1,\bar\chi)}{L(2s,\bar\chi)}\right)X^s\frac{ds}{s}.
  \end{align*}
  Shifting the contour to the left then gives the desired result.
\end{proof}
  
\bibliographystyle{plain}
\bibliography{modsymbbib}{}

\end{document}